\newcommand{\ie}{{\it i.e. }}
\newcommand{\cf}{{\it cf. }}
\newcommand{\eg}{{\it e.g. }}
\newcommand{\loccit}{{\it loc. cit. }}
\newcommand{\resp}{{\it resp. }}
\newcommand{\un}{\mathbf{1}}
\newcommand{\G}{\mathbb{G}}
\newcommand{\Q}{\mathbf{Q}}
\newcommand{\Z}{\mathbf{Z}}
\newcommand{\M}{\mathbf{M}}
\newcommand{\sA}{\mathcal{A}}
\newcommand{\sB}{\mathcal{B}}
\newcommand{\sC}{\mathcal{C}}
\newcommand{\sD}{\mathcal{D}}
\newcommand{\sI}{\mathcal{I}}
\newcommand{\sM}{\mathcal{M}}
\newcommand{\sN}{\mathcal{N}}
\newcommand{\sS}{\mathcal{S}}
\newcommand{\sV}{\mathcal{V}}
\newcommand{\Spec}{\operatorname{Spec}}
\newcommand{\Id}{\operatorname{Id}}
\newcommand{\Ker}{\operatorname{Ker}}
\newcommand{\Coker}{\operatorname{Coker}}
\newcommand{\IM}{\operatorname{Im}}
\newcommand{\abs}{{\operatorname{abs}}}
\newcommand{\tr}{{\operatorname{tr}}}
\newcommand{\End}{\operatorname{End}}
\newcommand{\Add}{{\operatorname{\bf Add}}}
\newcommand{\Ex}{{\operatorname{\bf Ex}}}
\newcommand{\Ab}{\operatorname{Ab}}
\newcommand{\car}{\operatorname{char}}
\newcommand{\rat}{{\operatorname{rat}}}
\newcommand{\num}{{\operatorname{num}}}
\newcommand{\rig}{{\operatorname{rig}}}
\newcommand{\tnil}{{\operatorname{tnil}}}
\newcommand{\Cat}{\operatorname{\bf Cat}}
\renewcommand{\Vec}{\operatorname{\bf Vec}}
\newcommand{\Rep}{\operatorname{\bf Rep}}
\newcommand{\by}{\xrightarrow}
\newcommand{\iso}{\by{\sim}}
\newcommand{\inj}{\hookrightarrow}
\newcommand{\surj}{\rightarrow\!\!\!\!\!\rightarrow}
\newcommand{\colim}{\varinjlim}
\renewcommand{\lim}{\varprojlim}
\newcommand{\fr}{{\operatorname{fr}}}
\renewcommand{\qed}{\hfill $\Box$\medskip}
\renewcommand{\phi}{\varphi}
\renewcommand{\epsilon}{\varepsilon}
\newcounter{spec}
\newenvironment{thlist}{\begin{list}{\rm{(\roman{spec})}}%
{\usecounter{spec}\labelwidth=20pt\itemindent=0pt\labelsep=10pt}}%
{\end{list}}%
\numberwithin{equation}{section}
\newtheorem*{Th}{Theorem}
\newtheorem{thm}{Theorem}[section]
\newtheorem{lemma}[thm]{Lemma}
\newtheorem{prop}[thm]{Proposition}
\newtheorem{cor}[thm]{Corollary}
\theoremstyle{definition}
\newtheorem{defn}[thm]{Definition}
\newtheorem{nota}[thm]{Notation}
\newtheorem{rk}[thm]{Remark}
\newtheorem{rks}[thm]{Remarks}
\newtheorem{ex}[thm]{Example}
\begin{document}
\title{A universal rigid abelian tensor category}
\author{Luca Barbieri-Viale}
\address{Dipartimento di Matematica ``F. Enriques'', Universit{\`a} degli Studi di Milano\\  Via C. Saldini, 50\\ I-20133 Milano\\ Italy }
\email{luca.barbieri-viale@unimi.it}
\author{Bruno Kahn}
\address{IMJ-PRG\\ Case 247\\4 place Jussieu\\
75252 Paris Cedex 05\\France}
\email{bruno.kahn@imj-prg.fr}
\subjclass{18D15, 14C15}
\keywords{Rigid category, envelope, motive}

\begin{abstract}
We prove that any rigid additive symmetric monoidal category can be mapped to a rigid abelian symmetric monoidal category in a universal way. This sheds a new light on abelian $\otimes$-envelopes and on motivic conjectures such as Grothendieck's standard conjecture D and Voevodsky's smash nilpotence conjecture.
\end{abstract}
\maketitle

\section*{Introduction} Recently there has been a lot of activity on embedding a given rigid additive symmetric monoidal category into an abelian one in a universal way: we can quote on the one hand Coulembier's monoidal abelian envelopes from \cite{coul1} and his subsequent works alone and with coauthors \cite{coul2,coul3}, on the other O'Sullivan's super Tannakian hulls \cite{os}. Different but closely related is the work of Schäppi \cite{schappi}. We may also quote that of Delpeuch in the non-additive context \cite[App. B.3]{del}.

Both Coulembier and O'Sullivan impose the condition that their envelopes represent \emph{faithful} monoidal functors (for very good reasons, see introduction to Section \ref{s7}). In this paper, we take a different approach which relaxes this restriction, based on Freyd's free abelian category on a given additive category \cite{freyd}. Freyd's construction associates to an additive category $\sC$ an abelian category $\Ab(\sC)$ and a (fully faithful) additive functor $\iota_\sC:\sC\to \Ab(\sC)$ such that any additive functor from $\sC$ to an abelian category $\sA$ extends uniquely to an exact functor from $\Ab(\sC)$ to $\sA$. Suppose that $\sC$ is symmetric monoidal. In \cite{BVHP}, the authors provided $\Ab(\sC)$ with a right exact, symmetric monoidal structure such that $\iota_\sC$ is strong monoidal and universal for strong monoidal functors to abelian categories provided with a right exact symmetric monoidal structure (with a technical condition, see \loccit, Prop. 1.10). 

Suppose now that $\sC$ is rigid. We construct a localisation $T(\sC)$ of $\Ab(\sC)$ which is rigid and such that the induced functor $\sC\to T(\sC)$ is, this time, universal for strong additive symmetric monoidal (not necessarily faithful) functors from $\sC$ to rigid symmetric monoidal abelian categories: see Theorem \ref{t1}. When $\sC$ admits a $\otimes$-envelope, the latter turns out to be a localisation of $T(\sC)$ (Proposition \ref{l5}).

The novel thing here is that the ring of endomorphisms $Z(T(\sC))$ of the unit object of $T(\sC)$ need not be a field even if this is the case for $\sC$:  if $\sC$ is the category of representations of an affine group scheme $G$ over a field of characteristic $0$, then $Z(T(\sC))$ is a field if and only if $G$ is proreductive, see Example \ref{ex1}. This example is the only one where $T(\sC)$ can be computed so far (and only for certain $G$'s), besides Example \ref{ex4}.

Our motivating example was the one where $\sC$ is the $\Q$-linear category $\sM_\rat(k)$ of Chow motives over a field $k$ (motives with $\Q$ coefficients modulo rational equivalence). Write $T(k)$ for $T(\sM_\rat(k))$. By Jannsen's work \cite{jannsen}, the category $\sM_\num(k)$ of motives modulo numerical equivalence is abelian semi-simple, whence a canonical  $\Q$-linear exact $\otimes$-functor $T(k)\to \sM_\num(k)$. We prove in  Theorem \ref{t3} that this functor is a Serre localisation: it is an equivalence of categories if and only if  $Z(T(k))$ 
 is a field. In particular, the latter condition implies Grothendieck's standard conjecture D \cite[3.6 (i)]{klei}: homological and numerical equivalences agree for any Weil cohomology over $k$. 

In Proposition \ref{p8}, we give a related consequence of the existence of $T(k)$ on Schäppi's category mentioned earlier.

Conversely, Voevodsky's conjecture \cite[Conj. 4.2]{voe}, predicting that smash nilpotent and numerical equivalences agree, implies that $Z(T(k))$ is a field: see Proposition \ref{p5}. To summarise the situation, we have the following chain of implications, noting that the canonical functor $\sM_\rat(k)\to T(k)$ factors through $\sM_\tnil(k)$ by Theorem \ref{t1}, where $\tnil$ is smash-nilpotent equivalence:

\begin{Th} Voevodsky's conjecture $\iff$ $Z(T(k))$ is a field and $\sM_\tnil(k)\allowbreak\to T(k)$ is faithful $\Rightarrow$ $Z(T(k))$ is a field $\iff$ $T(k)\iso \sM_\num(k)$ $\Rightarrow$ the standard conjecture D.
\end{Th}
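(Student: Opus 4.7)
The theorem chains together four statements, so the plan is to check each link, leveraging results already established. The middle reverse implication is trivial, and the equivalence $Z(T(k))$ is a field $\iff$ $T(k)\iso \sM_\num(k)$ is precisely Theorem \ref{t3}; so only the two outer claims need a short argument.

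For the rightmost implication $T(k)\iso \sM_\num(k) \Rightarrow$ Grothendieck's standard conjecture D, I would invoke the universal property of $T(k)$. Given any Weil cohomology $H \colon \sM_\rat(k) \to \sA$, where $\sA$ is a rigid symmetric monoidal abelian category (\eg super or graded vector spaces over the coefficient field), $H$ is strong additive symmetric monoidal, so by Theorem \ref{t1} it factors uniquely through $\sM_\rat(k) \to T(k)$. Under the hypothesis $T(k)\iso\sM_\num(k)$ this factorisation passes through $\sM_\num(k)$, hence any cycle class numerically equivalent to zero is sent to zero in $\sA$, and is thus homologically equivalent to zero. The reverse implication is automatic, so homological and numerical equivalences coincide.

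For the leftmost equivalence I would treat the two directions separately. \emph{Forward:} Assume Voevodsky, so that $\sM_\tnil(k) \iso \sM_\num(k)$. Proposition \ref{p5} yields that $Z(T(k))$ is a field, whence by Theorem \ref{t3} we have $T(k) \iso \sM_\num(k)$; combined with the Voevodsky equivalence, two-out-of-three forces the canonical functor $\sM_\tnil(k) \to T(k)$ to be an equivalence as well, in particular faithful. \emph{Backward:} Assume $Z(T(k))$ is a field and $\sM_\tnil(k) \to T(k)$ is faithful. Theorem \ref{t3} gives $T(k) \iso \sM_\num(k)$, so composing produces a faithful canonical functor $\sM_\tnil(k) \to \sM_\num(k)$. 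But a quotient of additive categories by a tensor ideal is automatically full and essentially surjective, so faithfulness upgrades it to an equivalence, \ie Voevodsky's conjecture.

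The argument therefore reduces to invoking earlier results in the correct order. The only delicate bookkeeping, needed in the backward direction of the first equivalence, is the identification of $\sM_\tnil(k) \to T(k) \to \sM_\num(k)$ with the canonical quotient; this follows from uniqueness in the universal property of $T(k)$ applied to the composite $\sM_\rat(k) \to \sM_\num(k)$.
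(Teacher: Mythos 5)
Your proof is correct and matches the paper's own (implicit) reasoning: the theorem in the introduction is a summary whose proof amounts to citing Proposition \ref{p5} (left equivalence), Theorem \ref{t3} c) (middle equivalence), and Corollary \ref{c1} (right implication), and your argument reproduces exactly that, merely unpacking Corollary \ref{c1} into the direct universal-property argument and re-deriving the faithfulness part of Proposition \ref{p5} via a harmless two-out-of-three detour.
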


If one wants to stay away from any conjecture, one can argue that $T(k)$ gives in some sense an answer to Grothendieck's quest for a universal abelian category representing all cohomology theories on smooth projective varieties.  However, Grothendieck really thought of \emph{Weil cohomologies}, and $T(k)$ does not carry \emph{a priori} a grading: we plan to solve this issue in a further work, by adjoining such grading universally (and unconditionally).

\section{Notation and terminology} An additive, symmetric, monoidal, unital category (with bilinear tensor product) will be  briefly called a \emph{$\otimes$-category}. A \emph{$\otimes$-functor} between $\otimes$-categories is a strong symmetric, monoidal, unital additive  functor. See \cite{saa} or \cite{dm} for the background.

\begin{nota} For any $\otimes$-category $\sC$, we write $Z(\sC)$ for $\End_\sC(\un)$.
\end{nota}

Recall that the ring $Z(\sC)$ is commutative \cite[I.1.3.3.1]{saa} and that $\sC$ is a $Z(\sC)$-linear category; it coincides with the ring of \cite[III.5.d]{gabriel} (\cf \cite[I.2.5.2]{saa}). If $F : \sC \to \sD$ is a $\otimes$-functor, we write $Z(F) : Z(\sC) \to Z(\sD)$ for the induced ring homomorphism. 

\begin{nota} a) $\Add^\otimes$: the $2$-category of $\otimes$-categories, $\otimes$-functors and (additive) $\otimes$-natural isomorphisms.\\
b) $\Ex^\otimes$: the $2$-category of abelian $\otimes$-categories, exact $\otimes$-functors and (additive) $\otimes$-natural isomorphisms.\\
c) $\Add^\rig$ and $\Ex^\rig$: their $1$-full and $2$-full sub-$2$-categories of rigid categories.
\end{nota}

We have the following basic lemma:

\begin{lemma}\label{l0} For $\sC\in \Add^\otimes$, let $\sC_\rig$ denote its strictly full subcategory of dualisable objects. Then $\sC_\rig$ is closed under direct sums and direct summands, and $F(\sC_\rig)\subset \sD_\rig$ for any $\otimes$-functor $F:\sC\to \sD$.
\end{lemma}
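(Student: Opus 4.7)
The lemma breaks into three independent claims. For closure under direct sums, given $X, Y \in \sC_\rig$ with duals $X^\vee, Y^\vee$ and duality data $(\eta_X, \epsilon_X), (\eta_Y, \epsilon_Y)$, I would take $X^\vee \oplus Y^\vee$ as the candidate dual of $X \oplus Y$. Using the bilinearity of $\otimes$, the product $(X \oplus Y) \otimes (X^\vee \oplus Y^\vee)$ decomposes into four biproduct summands; the coevaluation $\eta_{X \oplus Y}$ is $\eta_X$ landing in $X \otimes X^\vee$ plus $\eta_Y$ landing in $Y \otimes Y^\vee$, and analogously for $\epsilon_{X \oplus Y}$. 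The triangle identities then reduce coordinate-wise to those for $X$ and $Y$, the off-diagonal contributions vanishing.

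For the direct-summand step, suppose $Z = X \oplus Y$ is dualisable in $\sC$ with dual $Z^\vee$ and data $(\eta, \epsilon)$. Using the biproduct decompositions $Z \otimes Z^\vee = X \otimes Z^\vee \oplus Y \otimes Z^\vee$ and $Z^\vee \otimes Z = Z^\vee \otimes X \oplus Z^\vee \otimes Y$, extract the components $\eta^X, \eta^Y$ of $\eta$ and $\epsilon^X, \epsilon^Y$ of $\epsilon$. Expanding the two triangle identities for $Z$ through these decompositions yields: first-triangle identities for $X$ and $Y$ (with $Z^\vee$ as the would-be dual), vanishing of the cross-terms, and an identity $f + g = \id_{Z^\vee}$ where $f, g : Z^\vee \to Z^\vee$ are the orthogonal idempotents built from $(\eta^X, \epsilon^X)$ and $(\eta^Y, \epsilon^Y)$, respectively (orthogonality $fg = gf = 0$ follows from the cross-term vanishings). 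The plan is then to exhibit a biproduct decomposition $Z^\vee \cong X^\vee \oplus Y^\vee$ in $\sC$ splitting these idempotents; the summand $X^\vee$ (the image of $f$), with the restricted data, is then a dual of $X$.

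For the $\otimes$-functor preservation, given $F : \sC \to \sD$ and $X \in \sC_\rig$ with data $(X^\vee, \eta, \epsilon)$, I would set $F(X)^\vee := F(X^\vee)$, and construct its structure maps by composing $F(\eta), F(\epsilon)$ with the monoidal constraint isomorphisms $F(X) \otimes F(X^\vee) \iso F(X \otimes X^\vee)$ and $F(\un_\sC) \iso \un_\sD$. The triangle identities for $F(X)$ follow by applying $F$ to those for $X$ and invoking the coherence of the monoidal constraints.

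The chief obstacle is in the second step: establishing the splitting $Z^\vee \cong X^\vee \oplus Y^\vee$ of the orthogonal idempotents $f, g$ in $\sC$, in a purely additive (not necessarily Karoubian) setting. I would argue this by transporting the biproduct decomposition $-\otimes Z = -\otimes X \oplus -\otimes Y$ of additive endofunctors of $\sC$ across the adjunction $-\otimes Z \dashv -\otimes Z^\vee$, and evaluating the resulting splitting at the unit object.
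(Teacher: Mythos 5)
Your first and third steps (closure under direct sums, and preservation by $\otimes$-functors) are correct and are the standard diagram arguments; they agree with what the paper obtains by citing Dold--Puppe, Th.~1.3.

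The second step, however, has a genuine gap, and your proposed repair is circular. You correctly reduce the question to splitting, in $\sC$, the idempotent $e_X^T$ on $Z^\vee$ that is the transpose of $e_X=i_Xp_X$. Your plan is to transport the biproduct decomposition of the endofunctor $-\otimes Z$ across the adjunction $-\otimes Z\dashv-\otimes Z^\vee$ and evaluate at $\un$. But the mate of the natural idempotent $-\otimes e_X$ is exactly $-\otimes e_X^T$, and evaluating it at $\un$ gives back the very idempotent $e_X^T\colon Z^\vee\to Z^\vee$ whose splitting is in question. The adjunction only gives a bijection of hom-sets $\mathrm{Nat}(-\otimes Z,-\otimes Z)\cong\mathrm{Nat}(-\otimes Z^\vee,-\otimes Z^\vee)$; it is not a functor and does not carry a splitting of $-\otimes e_X$ to a splitting of $-\otimes e_X^T$, and even if $-\otimes e_X^T$ splits in the functor category, there is no reason the resulting summand should again be of the form $-\otimes W$ for some $W\in\sC$ --- that is precisely the failure of idempotent completeness of $\sC$ seen through Yoneda.

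The obstacle is real and cannot be removed by elementary manipulation. For instance, inside finite-dimensional $\Z$-graded $k$-vector spaces, let $L_n$ be the line in degree $n$ and let $\sC$ be the strictly full additive $\otimes$-subcategory generated under $\oplus$ and $\otimes$ by $\un=L_0$, $L_1$, $L_2$ and $L_{-1}\oplus L_{-2}$. Every one-dimensional object of $\sC$ sits in a nonnegative degree, so $L_{-1}\notin\sC$; yet $Z=L_1\oplus L_2$ is dualisable in $\sC$ with dual $L_{-1}\oplus L_{-2}\in\sC$, and $L_1$ is a direct summand of $Z$ with complement $L_2\in\sC$. Thus $L_1\notin\sC_\rig$ even though $Z\in\sC_\rig$. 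So the direct-summand clause requires either a pseudo-abelianness hypothesis on $\sC$ or a more restrictive reading, and in any case cannot be proved by the route you propose. Note that the paper does not give a direct proof either: it simply cites Dold--Puppe for both halves of the lemma.
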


\begin{proof} Both points follow from \cite[Th. 1.3]{dp}.
\end{proof}

\section{Reminders and complements on rigid abelian $\otimes$-categories} Let $\sA$ be a rigid abelian $\otimes$-category.

\begin{lemma}\label{l2} The $\otimes$-structure of $\sA$ is exact.
\end{lemma}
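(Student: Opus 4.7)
The plan is to show, for every object $X\in\sA$, that the endofunctor $-\otimes X:\sA\to\sA$ admits both a left and a right adjoint; a functor between abelian categories that has a right adjoint is left exact, and one with a left adjoint is right exact, so having both forces exactness. By symmetry of the monoidal structure, the same holds for $X\otimes -$, which is what ``exact tensor structure'' means.

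The input I would use is the standard rigidity adjunction. Since $\sA$ is rigid, every $X$ has a dual $X^\vee$ together with evaluation $\mathrm{ev}_X:X^\vee\otimes X\to\un$ and coevaluation $\eta_X:\un\to X\otimes X^\vee$ satisfying the triangle identities. First I would recall (from \cite[Prop.~1.9]{dm} or \cite{saa}) that these data produce a natural isomorphism
\[
\Hom_\sA(A\otimes X,B)\cong \Hom_\sA(A,B\otimes X^\vee),
\]
given in one direction by $f\mapsto (f\otimes \id_{X^\vee})\circ(\id_A\otimes\eta_X)$ and in the other by $g\mapsto (\id_B\otimes\mathrm{ev}_X)\circ(g\otimes\id_X)$. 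This exhibits $-\otimes X^\vee$ as a right adjoint to $-\otimes X$.

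Applying the same statement to the object $X^\vee$, and using that $(X^\vee)^\vee\cong X$ canonically in a rigid category, I obtain that $-\otimes X$ is itself right adjoint to $-\otimes X^\vee$. Thus $-\otimes X$ has both a left adjoint ($-\otimes X^\vee$) and a right adjoint ($-\otimes X^\vee$ again, in the other sense). Consequently $-\otimes X$ preserves all finite limits and all finite colimits, so it is an exact functor on the abelian category $\sA$. Using the symmetry constraint to identify $X\otimes -$ with $-\otimes X$, the exactness holds on both sides, giving the claim.

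The step most deserving of care is invoking the adjunctions correctly: one must make sure the rigidity data are natural enough to give honest adjunctions (not merely hom-isomorphisms on objects), but this is exactly what the triangle identities encode, so there is no real obstacle. The argument is purely formal and uses nothing beyond rigidity and the abelian-category hypothesis.
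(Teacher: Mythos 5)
Your argument is correct and is in substance the same as the paper's: the paper simply cites \cite[Prop.~1.16]{dm}, whose proof is precisely the adjunction argument you spell out (rigidity gives $-\otimes X^\vee$ as both left and right adjoint to $-\otimes X$, hence exactness). You have merely unpacked the reference rather than citing it.
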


This is \cite[Prop. 1.16]{dm}.\qed
\begin{prop}\label{l4} a) If $U$ is a subobject of $\un$, then $\un=U\oplus U^\perp$ where $U^\perp = \Ker(\un\to U^\vee)$, and $U\otimes U = U$.\\
b) There is a bijective correspondence between subobjects of $\un$, idempotents $e\in Z(\sA)$ and  decompositions
$\sA=\sA_1\times \sA_2$ in which an object is in $\sA_1$ (\resp in $\sA_2$) if $e$ (\resp
$1-e$) acts as the identity morphism on it.
\end{prop}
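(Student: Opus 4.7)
The plan is to prove (a) directly and then deduce (b) from the standard dictionary between product decompositions of an additive category and central idempotents. For (a), I first identify $U^\perp$. Setting $V:=\un/U$ with quotient $p:\un\twoheadrightarrow V$, dualising $0\to U\xrightarrow{i}\un\xrightarrow{p}V\to 0$ (using exactness of duality, which follows from Lemma \ref{l2}) yields $0\to V^\vee\to\un\xrightarrow{j}U^\vee\to 0$ with $j=i^\vee$; hence $U^\perp=V^\vee$ as subobjects of $\un$. Next, to show $U\otimes U\cong U$: tensor the original SES by $U$ to get $0\to U\otimes U\to U\to U\otimes V\to 0$, so it suffices to prove $U\otimes V=0$. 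The trick is the factorisation $U\xrightarrow{\id_U\otimes p}U\otimes V\xrightarrow{i\otimes\id_V}\un\otimes V=V$ (an epi followed by a mono): by bifunctoriality this composite equals $p\circ i=0$, forcing the mono to vanish, so $U\otimes V=0$.

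For the direct sum decomposition, consider $f:=j\circ i:U\to U^\vee$. Under the adjunction $\Hom(U,U^\vee)\cong\Hom(U\otimes U,\un)$, $f$ corresponds to $i\otimes i$, which factors as $U\otimes U\iso U\xrightarrow{i}\un$ by the previous step and is therefore a monomorphism. From this I deduce $f$ itself is mono: if $K:=\Ker f$ with inclusion $\iota_K:K\hookrightarrow U$, then $(i\otimes i)\circ(\iota_K\otimes\id_U)=\ev_U\circ((f\circ\iota_K)\otimes\id_U)=0$, and since $i\otimes i$ is mono the factor $\iota_K\otimes\id_U:K\otimes U\to U\otimes U$ must be zero; this factor being itself a mono, we get $K\otimes U=0$, whence $K\otimes K\hookrightarrow K\otimes U=0$. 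But applying the previous idempotency argument to the subobject $K\hookrightarrow\un$ gives $K\otimes K\cong K$, so $K=0$. Now $f$ is self-dual ($(j\circ i)^\vee=i^\vee\circ j^\vee=j\circ i$ after identifying $\un^\vee\cong\un$ and $U^{\vee\vee}\cong U$), and dualisation sends monos to epis, so $f$ is also epi, i.e.\ $f:U\iso U^\vee$. Therefore $U\cap U^\perp=\Ker f=0$, and $\un/(U+U^\perp)=U^\vee/\IM f=0$ (using $\un/V^\vee\cong U^\vee$ from the dualised SES), giving $\un=U\oplus U^\perp$.

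For (b), the summand decomposition $\un=U\oplus U^\perp$ defines an idempotent $e\in Z(\sA)$ with $\IM(e)=U$; conversely any idempotent in $Z(\sA)$ has image a subobject of $\un$, and the two constructions are mutually inverse. Given such an idempotent $e$, set $\sA_1$ (resp.\ $\sA_2$) to be the full subcategory of those $X$ on which $e$ (resp.\ $1-e$) acts as the identity via the canonical $Z(\sA)$-action on $\sA$; every $X$ splits canonically as $eX\oplus(1-e)X$, producing the product equivalence $\sA\simeq\sA_1\times\sA_2$. The three correspondences are compatible by construction.

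I expect the main obstacle to be the injectivity of $f=j\circ i$: all the other steps are routine exactness or self-duality, but $\Ker f=0$ requires a double application of the idempotency $U\otimes U=U$, first to see that $i\otimes i$ is a mono, and then reapplied to the subobject $K\hookrightarrow\un$ to deduce $K\otimes K\cong K$.
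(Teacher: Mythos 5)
Your proof is correct. The paper does not give an argument here at all: it simply cites \cite[Prop.~1.17 and Rem.~1.18]{dm}, and what you have written is essentially a self-contained reconstruction of the Deligne--Milne proof. The route is the standard one: establish $U\otimes V=0$ via the epi--mono factorisation of the zero map $U\to V$ through $U\otimes V$, deduce $U\otimes U\cong U$ from the tensored short exact sequence, and then upgrade the self-dual map $f=j\circ i\colon U\to U^\vee$ to an isomorphism. Your observation that injectivity of $f$ requires a \emph{second} application of the idempotency $W\otimes W\cong W$ (to $W=K=\Ker f$, itself a subobject of $\un$) is a correct and cleanly stated account of where the real content lies; together with self-duality forcing $f$ to be epi, this closes the argument. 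Part (b) is, as you say, the standard dictionary between direct-sum decompositions of $\un$, idempotents of $Z(\sA)$, and product decompositions of the category.
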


\begin{proof}
a) The first fact is \cite[Prop. 1.17]{dm}, and  the second one is contained in its proof. b) is \cite[Rem. 1.18]{dm}.
\end{proof}

\begin{lemma}\label{l10} Let $\sA=\sA_1\times \sA_2$. Any $\otimes$-functor $F: \sA \to \sB$ such that $Z(\sB)$ is a field vanishes either on $\sA_1$ or $\sA_2$.
\end{lemma}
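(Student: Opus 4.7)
My plan is to exploit the bijective correspondence in Proposition \ref{l4}(b) between decompositions $\sA = \sA_1 \times \sA_2$ and idempotents of the commutative ring $Z(\sA)$. Let $e \in Z(\sA)$ be the idempotent corresponding to the given decomposition, so that $e$ acts as $\id_X$ on every $X \in \sA_1$ and as $0$ on every $X \in \sA_2$ (here ``acts'' means: the endomorphism $e \otimes \id_X$ of $\un \otimes X \cong X$).

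Next, I would apply the induced ring homomorphism $Z(F) : Z(\sA) \to Z(\sB)$ to $e$. Since $Z(F)(e)$ is an idempotent in the field $Z(\sB)$, it must equal either $0$ or $1$. This is the main content of the argument; everything else is bookkeeping.

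Finally, I would use the fact that $F$ is a strong monoidal $\otimes$-functor: for any $X \in \sA$, $F(e \otimes \id_X) = Z(F)(e) \otimes \id_{F(X)}$, up to the unit coherence isomorphisms of $F$. If $Z(F)(e) = 0$, then for every $X \in \sA_1$ the map $e \otimes \id_X$ is $\id_X$, so $\id_{F(X)} = 0$ and hence $F(X) = 0$; thus $F$ vanishes on $\sA_1$. If $Z(F)(e) = 1$, apply the same argument to $1 - e$ to conclude that $F$ vanishes on $\sA_2$.

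I don't foresee a substantive obstacle: the whole lemma is a one-line consequence of ``an idempotent in a field is $0$ or $1$,'' once Proposition \ref{l4}(b) is invoked. The only minor care needed is to check that the naive action of $e$ on objects is compatible with $F$ being a $\otimes$-functor, which is immediate from the definition of a strong monoidal functor.
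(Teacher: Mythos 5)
Your argument is correct and is essentially identical to the paper's: both invoke Proposition~\ref{l4}(b) to produce the idempotent $e\in Z(\sA)$ and observe that its image under $Z(F)$ must be $0$ or $1$ in the field $Z(\sB)$. The paper's proof is a one-liner stating exactly this; your additional bookkeeping about $e\otimes\id_X$ and strong monoidality just makes the unwinding explicit.
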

\begin{proof}
Indeed, the idempotent corresponding to this decomposition in Proposition \ref{l4} b) must go to $0$ or $1$ in $Z(\sB)$.
\end{proof}

\begin{prop}\label{l3} Suppose that $Z(\sA)$ is a field. Then a $\otimes$-functor from $\sA$ to a (nonzero) abelian $\otimes$-category $\sB$ is faithful if it is exact. The converse is true if $Z(\sB)$ is a field.
\end{prop}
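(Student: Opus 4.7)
My plan is to treat the two directions separately, with the forward direction being the substantive computation.

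\textbf{Forward direction (exact $\Rightarrow$ faithful).} I first reduce faithfulness to conservativity. Since $F$ is exact, it preserves images, so for any morphism $f$ one has $F(f)=0$ iff $F(\IM f)=0$. Thus $F$ is faithful as soon as $F(A)=0 \Rightarrow A=0$.

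To prove this conservativity, take $A$ with $F(A)=0$, and let $U\hookrightarrow \un$ be the image of the evaluation $\ev_A\colon A^\vee\otimes A\to \un$ in $\sA$. Since $F$ is exact and monoidal, $F(U)$ is the image of $F(\ev_A)=\ev_{F(A)}\colon 0\to \un_\sB$, which is $0$; hence $F(U)=0$. By Proposition \ref{l4}(b), $U$ corresponds to an idempotent of $Z(\sA)$, and as $Z(\sA)$ is a field the only idempotents are $0$ and $1$, so $U=0$ or $U=\un$. The case $U=\un$ gives $F(\un)=0$, contradicting $\sB\neq 0$. Thus $U=0$, i.e., $\ev_A=0$, and the rigidity identity
\[
\id_A=(\id_A\otimes\ev_A)\circ(\operatorname{coev}_A\otimes\id_A)
\]
forces $\id_A=0$, hence $A=0$.

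\textbf{Converse direction ($Z(\sB)$ field, faithful $\Rightarrow$ exact).} Faithfulness immediately gives conservativity: $F(\id_A)=\id_{F(A)}$, so $F(A)=0$ forces $\id_A=0$, i.e., $A=0$. To upgrade this to exactness of $F$, the idea is that $F$ preserves duals (Lemma \ref{l0}), which makes preservation of monomorphisms and epimorphisms equivalent, and the condition that $Z(\sB)$ is a field prevents $F$ from splitting across a nontrivial factor (via Lemma \ref{l10}). This is essentially the classical Tannakian statement, cf. \cite[Prop. 1.19]{dm}.

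\textbf{Main obstacle.} The forward direction is the interesting one; its clean resolution rests on the bijection of Proposition \ref{l4} between subobjects of $\un$ and idempotents of $Z(\sA)$, which reduces the vanishing of $A$ to the triviality of the idempotent classifying $\IM(\ev_A)$, combined with the rigidity axiom to translate $\ev_A=0$ into $A=0$. The converse is essentially a Tannakian reflection of the forward direction and is the less substantive part.
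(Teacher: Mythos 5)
Your forward direction (exact $\Rightarrow$ faithful) is a correct and complete proof, and it is essentially the argument underlying the paper's citation of \cite[Prop.~1.19]{dm}: reduce faithfulness to conservativity via images, classify the image $U$ of $\ev_A$ among subobjects of $\un$ using Proposition~\ref{l4}(b) and the absence of nontrivial idempotents in the field $Z(\sA)$, rule out $U=\un$ because it would force $\un_\sB=0$, and conclude $A=0$ from $\ev_A=0$ via the rigidity identity. This part is fine.

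The converse direction is where there is a genuine gap. You state that faithfulness gives conservativity, that preservation of duals makes preservation of monos and epis equivalent, and that $Z(\sB)$ being a field ``prevents $F$ from splitting across a nontrivial factor (via Lemma~\ref{l10})''; you then appeal to \cite[Prop.~1.19]{dm}. But \cite[Prop.~1.19]{dm} is only the forward implication (exact $\Rightarrow$ faithful) and says nothing about the converse, so this citation does not support the claim. More substantively, the three observations you list do not add up to a proof of exactness: conservativity plus preservation of duals tells you that preservation of monos is equivalent to preservation of epis, but it gives no mechanism for establishing either, and Lemma~\ref{l10} (which concerns $\otimes$-functors out of a product category) is not obviously applicable when there is no decomposition $\sA = \sA_1\times\sA_2$ in sight --- indeed $Z(\sA)$ is a field here, so $\sA$ has no such decomposition. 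The implication ``faithful $\Rightarrow$ exact'' under the hypothesis that $Z(\sB)$ is a field is the nontrivial content of \cite[Th.~2.4.1]{coul2} (see also \cite[Lemma~10.7]{os}), and the paper's proof of Proposition~\ref{l3} consists precisely of citing that result; the argument there uses finer structural input about rigid abelian $\otimes$-categories that your outline does not reproduce. As written, your converse is a heuristic sketch with a misattributed reference, not a proof.
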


\begin{proof}
``If'' is \cite[Prop. 1.19]{dm}, and ``only if'' is \cite[Th. 2.4.1]{coul2} (see also \cite[Lemma 10.7]{os} in the case of super Tannakian categories).
\end{proof}

\begin{prop}\label{p2} a) if $Z(\sA)$ is a field, $\sA$ is integral: for two morphisms $f,g$, $f\otimes g=0$ implies $f=0$ or $g=0$.\\
b) In general, $\sA$ is reduced: for a morphism $f$, $f^{\otimes 2}=0$ implies $f=0$. In particular, the ring $Z(\sA)$ is reduced.\\
c) $\sA$ is fractionally closed in the sense of \cite[Sect. 3, bot. p. 6]{os}. In particular, $Z(\sA)$ is its own total ring of quotients.\\
d) If $Z(\sA)$ is a domain then it is a field.\\
e) $\un$ is Noetherian (equivalently Artinian) if and only if  $Z(\sA)$ is, and then $\sA$ is equivalent to  a product $\prod_{i\in I} \sA_i$, where $I$ is finite and $Z(\sA_i)$ is a field for each $i$.\\
f) $Z(\sA)$ is absolutely flat (= von Neumann regular).
\end{prop}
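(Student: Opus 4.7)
Throughout, the common strategy is to exploit the correspondence from Proposition~\ref{l4}(b) between subobjects of $\un$, idempotents of $Z(\sA)$, and product decompositions $\sA=\sA_1\times\sA_2$, combined with exactness of $\otimes$ (Lemma~\ref{l2}) and the rigidity triangle identities. For the vanishing statements (a) and (b), image factorization and exactness of $\otimes$ reduce from morphisms to objects: if $f,g$ have images $I_f, I_g$, then $I_{f\otimes g}=I_f\otimes I_g$, so $f\otimes g=0$ iff $I_f\otimes I_g=0$.

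For (a), if $X\neq 0$ then $\ev_X\neq 0$ (otherwise the triangle identity $1_X=(1_X\otimes\ev_X)(\eta_X\otimes 1_X)$ forces $1_X=0$). Hence $\IM(\ev_X)\subseteq\un$ is a nonzero subobject, and when $Z(\sA)$ is a field the only subobjects of $\un$ are $0$ and $\un$, so $\ev_X$ is epi. Tensoring with $Y$ preserves epis, giving an epi $X^\vee\otimes X\otimes Y\twoheadrightarrow Y$; the source vanishes under $X\otimes Y=0$, whence $Y=0$. For (b), reduced to $X\otimes X=0$: the symmetry isomorphism gives $X\otimes X^\vee\otimes X\cong X^\vee\otimes(X\otimes X)=0$, and the triangle identity again forces $1_X=0$, hence $X=0$. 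Applied with $f=a\in\End(\un)$ (where $a^{\otimes 2}$ corresponds to $a^2$ under $\un\otimes\un\cong\un$), together with an induction on the nilpotency index, this shows $Z(\sA)$ is reduced.

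For (c), (d), (f), analyze $a\in Z(\sA)$ through $K=\Ker(a)$ and $I=\IM(a)$ with associated idempotents $e_K, e_I\in Z(\sA)$. The kernel property gives $a\cdot e_K=0$, and the image factorization combined with commutativity of $Z(\sA)$ gives $a\cdot(1-e_I)=0$. If $a$ is a non-zero-divisor then $e_K=0$ and $e_I=1$, so $a$ is both mono and epi, hence an iso; this proves (c) at the ring level, and the categorical formulation of fractional closure follows by applying the same image analysis to general morphisms. Part (d) is then formal: in a domain, every nonzero element is a non-zero-divisor and hence, by (c), a unit. For (f), the induced map $a|_{K^\perp}\colon K^\perp\iso I$ is an isomorphism; extending its inverse $b_0$ by zero on $I^\perp$ produces $b\colon\un\twoheadrightarrow I\xrightarrow{b_0}K^\perp\hookrightarrow\un$, and a direct computation on $\un=K\oplus K^\perp$ yields $ba=1-e_K$, whence $aba=a(1-e_K)=a$ since $ae_K=0$; thus $Z(\sA)$ is absolutely flat.

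For (e), using (f) every finitely generated ideal of $Z(\sA)$ is principal, generated by an idempotent, and Proposition~\ref{l4}(b) identifies the lattice of subobjects of $\un$ with the Boolean algebra of idempotents of $Z(\sA)$. A Boolean algebra with ACC is finite, and in the absolutely flat setting this is equivalent to $Z(\sA)\cong\prod_{i=1}^n K_i$ being a finite product of fields, hence to $Z(\sA)$ being Noetherian (equivalently Artinian); the primitive orthogonal idempotents then decompose $\un$, and iterating Proposition~\ref{l4}(b) yields $\sA\cong\prod_i\sA_i$ with each $Z(\sA_i)$ a field. The main obstacle I anticipate is (b) without the field hypothesis: the step $X\otimes X=0\Rightarrow X=0$ cannot be outsourced to commutative algebra on $Z(\sA)$ and requires genuine use of rigidity (symmetry together with a triangle identity); the construction of $b$ in (f) is the other delicate point, but becomes mechanical once the factorization $\un\twoheadrightarrow I\iso K^\perp\hookrightarrow\un$ is identified.
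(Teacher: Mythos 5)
Your proposal is correct and, unlike the paper's proof --- which outsources parts (a), (c) and (f) to external references \cite{exandfaith}, \cite{os} and \cite{krig} --- gives essentially self-contained arguments built from Proposition~\ref{l4} (idempotents of $Z(\sA)$ $\leftrightarrow$ direct summands of $\un$ $\leftrightarrow$ product decompositions), exactness of $\otimes$ (Lemma~\ref{l2}), and the rigidity triangle identities. The shared backbone is the same; what differs is how each piece is finished. For (b), the paper reduces to $f:\un\to A$, observes that $\IM(f)$ is a direct summand of $\un$ satisfying $\IM(f)^{\otimes 2}=\IM(f)$ by Proposition~\ref{l4}(a), and concludes $f^{\otimes 2}\ne 0$ by exactness; you instead reduce to the object-level statement $X^{\otimes 2}=0\Rightarrow X=0$ and close it with the triangle identity factoring $1_X$ through $X\otimes X^\vee\otimes X\cong X^\vee\otimes X^{\otimes 2}=0$. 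Your route is more ``purely rigid'' (it never uses the idempotence of subobjects of $\un$), while the paper's stays inside the subobject-of-$\un$ calculus; both are valid. For (f), your explicit construction $b = \bigl(\un\twoheadrightarrow I\iso K^\perp\hookrightarrow\un\bigr)$ with $ba=1-e_K$ and $aba=a$ is exactly the kind of argument one would want in place of the citation, and the identity $1-e_K = e_I$ is forced by commutativity of $Z(\sA)$, which is a nice sanity check on the computation. For (e), deriving it from (f) together with the Boolean-algebra/ACC argument is cleaner than the paper's terse ``follows easily.''

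One caveat worth flagging: in (c) you prove carefully that every non-zero-divisor of $Z(\sA)$ is a unit (the ``In particular'' clause), but the sentence ``the categorical formulation of fractional closure follows by applying the same image analysis to general morphisms'' is only a gesture. O'Sullivan's notion of being fractionally closed is a condition on the whole category, not just on $Z(\sA)$, and verifying it requires spelling out how the idempotent/kernel analysis interacts with arbitrary morphisms, not just endomorphisms of $\un$. Since the paper itself simply cites \cite[Lemma 3.1]{os} for this, you are not worse off than the paper, but if you intend your argument to replace the citation you should either quote the precise definition and run the analysis, or keep the citation for (c) proper and claim only the ring-level consequence. Everything else is sound.
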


\begin{proof} a) is \cite[Rem. 2.10]{exandfaith}. b) was suggested to the second author by Peter O'Sullivan. By rigidity, we may assume that the domain of $f$ is $\un$. Then $f$ factors through a monomophism $\Coker(f)\inj A$, where $A$ is the codomain of $f$. If $f\ne 0$, $\Coker(f)\ne 0$; but this is a subobject of $\un$ by Proposition \ref{l4} a), hence $\Coker(f)^{\otimes 2} = \Coker(f)\ne 0$ by the same Proposition, and $f\otimes f\ne 0$ by Lemma \ref{l2}.  c) is \cite[Lemma 3.1]{os}. d) is special case of c), but we give a direct proof: if $Z(\sA)$ is not a field, it contains an idempotent by Proposition \ref{l4} b) so it cannot be a domain. e) follows easily from the same proposition.
The consequences of b) and c) on $Z(\sA)$ follow from the fact that, in this ring, composition and tensor product coincide, see  \cite[I.1.3.3.1]{saa}.
For f), see \cite[Prop. 3.2]{krig}.   
\end{proof}

\section{Complements on abelian $\otimes$-categories}

In this section, $\sA$ is an abelian (not necessarily rigid) $\otimes$-category. 
The following proposition completes the proof of \cite[Prop. 1.13 (3)]{BVHP}, removing its hypothesis of right exactness of the Homs.

\begin{prop}\label{p3.1} If the tensor structure of $\sA$ is exact, then the full subcategory $\sA_\rig$ of dualisable objects is closed under kernels and cokernels. In particular, $\sA_\rig$ is abelian and the inclusion functor $\sA_\rig\inj \sA$ is exact.
\end{prop}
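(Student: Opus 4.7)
The plan is to prove closure under cokernels and deduce closure under kernels by duality: for $f : X \to Y$ in $\sA_\rig$, once $\Coker(f^\vee : Y^\vee \to X^\vee)$ is known to be dualisable, its dual is canonically $\Ker(f^{\vee\vee}) \cong \Ker(f)$, so $\Ker(f)$ lies in $\sA_\rig$ as the dual of a dualisable object. For the cokernel statement, the candidate dual of $C := \Coker(f)$ is $K' := \Ker(f^\vee)$.

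I plan to construct the evaluation $\ev : K' \otimes C \to \un$ from the composite $K' \otimes Y \inj Y^\vee \otimes Y \xrightarrow{\ev_Y} \un$. Since $K' \otimes -$ is exact, we have $K' \otimes C = \Coker(1_{K'} \otimes f)$, so it suffices to check that the composition with $1_{K'} \otimes f : K' \otimes X \to K' \otimes Y$ is zero. By the interchange law and the defining identity $\ev_X \circ (f^\vee \otimes 1_X) = \ev_Y \circ (1_{Y^\vee} \otimes f)$ of $f^\vee$, this composition rewrites as
\[
K' \otimes X \inj Y^\vee \otimes X \xrightarrow{f^\vee \otimes 1_X} X^\vee \otimes X \xrightarrow{\ev_X} \un,
\]
in which the first two arrows compose to zero since $K' \to Y^\vee \xrightarrow{f^\vee} X^\vee$ vanishes. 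The coevaluation $\eta : \un \to C \otimes K'$ is built dually: push $\eta_Y$ through $Y \otimes Y^\vee \surj C \otimes Y^\vee$ to a map $\un \to C \otimes Y^\vee$ and check it factors through the monomorphism $C \otimes K' \inj C \otimes Y^\vee$. The required vanishing of the composite with $1_C \otimes f^\vee$ reduces, via the standard identity $(1_Y \otimes f^\vee) \circ \eta_Y = (f \otimes 1_{X^\vee}) \circ \eta_X$, to $q \circ f = 0$ for $q : Y \surj C$.

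The remaining, and most delicate, step is the verification of the two triangle identities for $(\ev, \eta)$. My plan is to deduce them from the identities for $Y$ (and $X$) by diagram chase, descending through the epimorphism $Y \surj C$ on one side and lifting through the monomorphism $K' \inj Y^\vee$ on the other; both operations are legitimate because $\otimes$ is exact, so that tensoring with $K'$ and $C$ preserves the relevant mono- and epimorphisms and allows the triangle identities for $Y$ to be transferred to those for $C$. I expect this diagram chase to be the main technical obstacle, but it should be essentially mechanical given the exactness hypothesis. Once closure under kernels and cokernels is established, combining with Lemma \ref{l0} yields that $\sA_\rig$ is an abelian subcategory of $\sA$ and that the inclusion is exact.
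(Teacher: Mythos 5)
Your proof is correct, but it takes a genuinely different route from the paper. You construct the duality data $(\ev,\eta)$ for $\Coker(f)$ explicitly (using $K'=\Ker(f^\vee)$ as the candidate dual) and then verify the triangle identities by transporting them along $q\colon Y\surj C$ and $\iota\colon K'\inj Y^\vee$. The paper instead characterises duals through the Hom-adjunction: with $A'\to A\to A''\to 0$ exact and $K=\Ker(A^\vee\to A'^\vee)$, the exactness of $\otimes$ plus rigidity of $A,A'$ gives a natural isomorphism $\sA(A''\otimes B,C)\iso\sA(B,K\otimes C)$ by a five-lemma argument on a commutative ladder of exact sequences, and this adjunction \emph{automatically} exhibits $K$ as a (one-sided, hence by symmetry two-sided) dual of $A''$, with no triangle identities to check by hand. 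The payoff of the paper's approach is that the coherence conditions are packaged into the Yoneda-type argument; the payoff of yours is that you obtain the evaluation and coevaluation explicitly. For the record, the diagram chase you defer does go through essentially as you describe: with $\eta$ characterised by $(1_C\otimes\iota)\eta=(q\otimes 1_{Y^\vee})\eta_Y$ and $\ev$ by $\ev\circ(1_{K'}\otimes q)=\ev_Y\circ(\iota\otimes 1_Y)$, one has $(1_C\otimes\ev)(\eta\otimes 1_C)q = q(1_Y\otimes\ev_Y)(\eta_Y\otimes 1_Y)=q$, and since $q$ is epi this gives one triangle identity; the other is symmetric. Your reduction of closure under kernels to closure under cokernels via $f\cong f^{\vee\vee}$ is also fine and parallels the paper's ``argue similarly by changing the side of the tensor products.''
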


\begin{proof} Let $A'\to A\to A''\to 0$ be exact, with $A',A\in \sA_\rig$, and write $K=\Ker(A^\vee\to {A'}^\vee)$. For any $B,C\in \sA$, we have a commutative diagram of exact sequences
\[\begin{CD}
0\to \sA(A''\otimes B,C)@>>> \sA(A\otimes B,C)@>>> \sA(A'\otimes B,C)\\
@V\wr VV @V\wr VV\\
0\to \sA(B,K\otimes C)@>>> \sA(B,A^\vee\otimes C)@>>> \sA(B,{A'}^\vee\otimes C)
\end{CD}\]
due to the exactness of $\otimes$. It induces an isomorphism $\sA(A''\otimes B,C)\iso \sA(B,K\otimes C)$ natural in $B$ and $C$, showing that $K$ is right dual to $A''$. But then, it is also left dual since $\otimes$ is symmetric (see \cite{brug}).

For an exact sequence $0\to A'\to A\to A''$ with $A,A''\in \sA_\rig$, we argue similarly by changing the side of the tensor products in the Hom groups.
\end{proof}

\begin{prop}\label{p3.2} Suppose $\sA$ essentially small with a right exact tensor structure. Let $\sI\subseteq \sA $ be a Serre subcategory containing the objects  $K(f,A):=\Ker(1_A\otimes f)$ for all monomorphisms $f$ and stable under (external) tensor products. Then $\sA/\sI$ inherits a tensor structure, which is exact. If $\sI$ is minimal for those properties, $\sA\to \sA/\sI$ is initial for exact $\otimes$-functors from $\sA$ to an abelian $\otimes$-category with exact tensor structure.
\end{prop}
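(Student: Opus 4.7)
The plan is to descend the tensor structure of $\sA$ to the Serre quotient $\sA/\sI$, check that the resulting structure is exact, and then verify universality by observing that the kernel of any such $F$ automatically satisfies the conditions defining the family among which $\sI$ is minimal.

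First, descent of $\otimes$ to $\sA/\sI$ amounts to showing that for every $C\in\sA$, tensoring with $C$ sends $\sI$-isomorphisms (morphisms whose kernel and cokernel lie in $\sI$) to $\sI$-isomorphisms. Cokernel control is immediate from right exactness: $\Coker(1_C\otimes f)=C\otimes\Coker(f)\in\sI$. Kernel control is the crux. I factor a morphism $f:X\to Y$ as a surjection $p:X\twoheadrightarrow\IM(f)$ followed by a monomorphism $i:\IM(f)\inj Y$; right exactness applied to $\Ker(f)\to X\to\IM(f)\to 0$ identifies $\Ker(1_C\otimes p)$ as a quotient of $C\otimes\Ker(f)\in\sI$, while $\Ker(1_C\otimes i)=K(i,C)\in\sI$ by hypothesis. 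Since $1_C\otimes p$ remains epi, the kernel of the composite fits in a short exact sequence
\[0\to\Ker(1_C\otimes p)\to\Ker(1_C\otimes f)\to\Ker(1_C\otimes i)\to 0,\]
forcing $\Ker(1_C\otimes f)\in\sI$. Associator, braiding and unit constraints descend formally through $\pi$. Right exactness on $\sA/\sI$ is inherited; for left exactness, one lifts a short exact sequence in $\sA/\sI$ to one $0\to \tilde X\to \tilde Y\to \tilde Z\to 0$ in $\sA$ (standard Serre-quotient trick: represent the epi by a morphism in $\sA$, replace its target by its image, take the kernel), and observes that the obstruction to left exactness of $\tilde C\otimes-$ at this sequence is $K(\tilde X\inj\tilde Y,\tilde C)\in\sI$, which $\pi$ annihilates.

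For universality, given an exact $\otimes$-functor $F:\sA\to\sB$ into an abelian $\otimes$-category with exact tensor, the kernel $\sJ:=\{X\in\sA:F(X)=0\}$ is a Serre subcategory by exactness of $F$, is stable under external tensor product ($F(A\otimes X)=F(A)\otimes F(X)=0$), and contains every $K(f,A)$: indeed $F(f)$ is monic because $F$ is exact, then $1_{F(A)}\otimes F(f)$ is monic by exactness of $\otimes$ in $\sB$, so $F(K(f,A))=\Ker(1_{F(A)}\otimes F(f))=0$. Minimality of $\sI$ forces $\sI\subseteq\sJ$, and the universal property of the Serre localisation produces a unique exact factorisation $\bar F:\sA/\sI\to\sB$ through $\pi$; its $\otimes$-structure transports from that of $F$ since $\pi$ is strong monoidal and essentially surjective.

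The main obstacle is the kernel computation in the first step: because $\otimes$ on $\sA$ is only right exact, one cannot directly propagate monicity through $C\otimes-$. The hypothesis that $K(f,A)\in\sI$ for every monomorphism $f$ is calibrated precisely to tame the failure of left exactness on monos, and the image factorisation then upgrades this control from monos to arbitrary $\sI$-isomorphisms, underpinning both the descent of $\otimes$ and the exactness on the quotient.
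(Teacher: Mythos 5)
Your proof is correct and follows essentially the same route as the paper's. The only cosmetic difference is that the paper works with the multiplicative system $\Sigma$ of $\sI$-isomorphisms and treats monomorphisms and epimorphisms as two separate cases before combining them via the mono-epi factorisation, whereas you phrase everything directly in terms of $\sI$-isomorphisms and use the ker-coker sequence for a composite to handle the factorisation in one stroke; the underlying computation (cokernel control by right exactness, kernel control by the $K(f,A)\in\sI$ hypothesis, Gabriel's lifting of short exact sequences to $\sA$, and identifying $\Ker F$ as a Serre $\otimes$-ideal containing all $K(f,A)$) is identical.
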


\begin{proof} 
Let $\Sigma$ be the multiplicative system associated to $\sI$, \ie $s\in \Sigma$ $\iff$ $\Ker s, \Coker s\in \sI$. The first point means that $\Sigma$ is stable under tensor product with identities on the left and on the right, so, say, on the left by symmetry. We proceed as follows:

1) Let $s:C\inj D$ be a monomorphism in $\Sigma$, and let $A\in \sA$. The exact sequence 
\[0\to K(s,A)\to A\otimes C\by{1_A\otimes s} A\otimes D\to A\otimes \Coker s\to 0\]
shows that $1_A\otimes s\in \Sigma$.

2) Let $s:C\surj D$ be an epimorphism in $\Sigma$, and let $A\in \sA$. The exact sequence
\[A\otimes \Ker s\to A\otimes C\by{1_A\otimes s} A\otimes D\to 0\]
shows  that $1_A\otimes s\in \Sigma$.

3) Any $s\in \Sigma$ can be written $t\circ u$, with $u$ epi, $t$ mono and $t,u\in \Sigma$. This, with 2) and 3), completes the proof of the first point.

Let us now prove exactness of the tensor product. Let $A\in \sA/\sI$ and let $(*): 0\to C'\by{f} C\by{g} C''\to 0$ be a short exact sequence in $\sA/\sI$ (recall that $\sA$ and $\sA/\sI$ have the same objects). By \cite[p. 368, Cor. 1]{gabriel}, $(*)$ is isomorphic to a short exact sequence of $\sA$; to show that $A\otimes (*)$ is exact, we may therefore assume that $f$ and $g$ come from morphisms of $\sA$. Since the projection functor $\sA\to \sA/\sI$ is exact, we already have right exactness. But $1_A\otimes f$ is a monomorphism in $\sA/\sI$ by definition of $\sI$, so the proof is complete.

Note that a minimal $\sI$ exists: any intersection of Serre subcategories (\resp closed under external tensor product) is a Serre subcategory (\resp is closed under external tensor product). The initiality is now clear, since any exact $\otimes$-functor from $\sA$ to an abelian $\otimes$-category sends all objects $K(f,A)$ to $0$.
\end{proof}

\begin{rk} Let $\sI_0$ be the minimal Serre subcategory as in Proposition \ref{p3.2}, and let $\sI'\subseteq \sI_0$ denotes the smallest Serre subcategory of $\sA$ containing the objects  $K(f,A)$ (no $\otimes$-ideal condition). It is tempting to try and show that $\sI'=\sI_0$. At least, $B\otimes K(f,A)\in \sI'$ for any $A,B\in \sA$ and any monomorphism $f:C\inj D$, as follows from the exact sequence
\[0\to K(g,B) \to B\otimes K(f,A)\to K(f,B\otimes A)\]
where $g$ is the monomorphism $K (f, A)  \inj A \otimes C$. But this does not seem sufficient: since $\otimes$ is right exact, $\sC\otimes\sI'$ is stable under quotients and extensions, but maybe not under kernels.
\end{rk}

\begin{rk}  In general, $Z(\sA/\sI)$ need not be a field even if $Z(\sA)$ is: see Example \ref{ex1} below. Here are two things one can say:\\
a) We have
\begin{equation}\label{eq3}
Z(\sA/\sI) = \colim_{A',A''} \sA(A',\un/A'') 
\end{equation}
where $A'$ (\resp $A''$ runs through subobjects of $\un$ such that $\un/A'\in \sI$ (\resp $A''\in \sI$): this translates the definition of morphisms in $\sA/\sI$ as in \cite[III.1]{gabriel}.\\
b) $\un_{\sA/\sI}$ is irreducible $\iff$ any subobject of $\un_\sA$ belongs to $\sI$ $\iff$ any quotient of $\un_\sA$ belongs to $\sI$: this follows from \cite[p. 368, Cor. 1]{gabriel} which was already used in the proof of Proposition \ref{p3.2}. In particular, $\un_\sA$ irreducible $\Rightarrow$ $\un_{\sA/\sI}$ irreducible.
\end{rk}

\begin{prop}\label{p4} Let $\sA\in \Ex^\otimes$ be essentially small and let $\sI$ be a Serre subcategory stable under external tensor product. If the tensor structure of $\sA$ is exact, there is a unique tensor structure on $\sA/\sI$ such that $p:\sA\to \sA/\sI$ is a $\otimes$-functor, and this tensor structure is exact. If $\sA$ is rigid, so is $\sA/\sI$, and $Z(\sA)\to Z(\sA/\sI)$ is surjective.
\end{prop}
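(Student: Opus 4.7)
The plan is to handle the three claims separately. For existence and exactness of the $\otimes$-structure on $\sA/\sI$, I would invoke Proposition \ref{p3.2}: since the tensor structure of $\sA$ is already exact, the objects $K(f,A)=\Ker(1_A\otimes f)$ appearing there vanish for every monomorphism $f$, so they lie vacuously in $\sI$, and \ref{p3.2} applies verbatim to give $\sA/\sI$ an exact $\otimes$-structure with $p$ a $\otimes$-functor. For uniqueness, I would simply note that $p$ is the identity on objects (forcing $p(A)\otimes p(B)=p(A\otimes B)$) and that every morphism in $\sA/\sI$ is represented by a morphism in $\sA$ between a subobject and a quotient-object, which then forces the tensor product on morphisms.

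Rigidity of $\sA/\sI$ should be immediate from Lemma \ref{l0}: every object of $\sA/\sI$ has the form $p(A)$ with $A\in\sA=\sA_\rig$, hence lies in $(\sA/\sI)_\rig$.

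The main work lies in the surjectivity of $Z(\sA)\to Z(\sA/\sI)$, and here rigidity genuinely intervenes through Proposition \ref{l4}(a). Given $\alpha\in Z(\sA/\sI)$, I would use the colimit description \eqref{eq3} to represent it by some $f\colon A'\to\un/A''$ in $\sA$ with $A'\inj\un$ satisfying $\un/A'\in\sI$ and $A''\inj\un$ satisfying $A''\in\sI$. By Proposition \ref{l4}(a), both inclusions split: $\un=A'\oplus(A')^\perp$ with $(A')^\perp\cong\un/A'\in\sI$, and similarly $\un=A''\oplus(A'')^\perp$. Let $\pi\colon\un\surj A'$ denote the projection from the first splitting and $\iota\colon\un/A''\iso(A'')^\perp\inj\un$ the inclusion from the second. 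My candidate lift of $\alpha$ would then be $\tilde\alpha:=\iota\circ f\circ\pi\in Z(\sA)$.

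The hard part --- and the main obstacle I foresee --- is verifying $p(\tilde\alpha)=\alpha$. In $\sA/\sI$ the canonical inclusion $A'\inj\un$ becomes invertible (its cokernel $(A')^\perp\cong\un/A'$ lies in $\sI$) with inverse $p(\pi)$, and dually $\un\surj\un/A''$ becomes invertible with inverse $p(\iota)$; but to conclude, one must unwind the standard identification of the colimit \eqref{eq3} with $\Hom_{\sA/\sI}(\un,\un)$ and check that it sends the class of $f$ precisely to $p(\iota f\pi)$, rather than to some related but distinct morphism. This is purely bookkeeping, but it is the only step that requires genuine care.
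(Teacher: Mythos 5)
Your proposal matches the paper's proof: existence and exactness via Proposition \ref{p3.2} (the $K(f,A)$ vanish by exactness), rigidity via Lemma \ref{l0} using surjectivity of $p$ on objects, and surjectivity of $Z(\sA)\to Z(\sA/\sI)$ via \eqref{eq3} together with the splittings from Proposition \ref{l4}(a). The ``hard part'' you flag is not a genuine obstacle: the colimit \eqref{eq3} \emph{is} by definition $\Hom_{\sA/\sI}(\un,\un)$, its transition map from the initial index $(\un,0)$ to $(A',A'')$ is exactly $Z(\sA)\to\sA(A',\un/A'')$, and surjectivity of that map at every stage (which is what Proposition \ref{l4} gives) already yields surjectivity onto the colimit, with no need to chase an explicit lift $\tilde\alpha$.
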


\begin{proof} The first two points are special cases of Proposition \ref{p3.2}, since all objects $K(f,A)$ are $0$ by assumption. The rigidity of $\sA/\sI$ follows from Lemma \ref{l0}, since $p$ is surjective.  The last point follows from \eqref{eq3} and Proposition \ref{l4}, which implies that $Z(\sA)\to \sA(A',\un/A'')$ is (split) surjective for all $A',A''\subseteq \un$.
\end{proof}

\section{Freyd's universal construction} 

Recall   (\cite{freyd}, see also \cite[2.10]{krause}, \cite[Th. 4.1 and Cor. 4.2]{prest},  \cite[\S 1]{BVHP}) that, for any additive category $\sC$, there is an abelian category $\Ab(\sC)$ and an additive functor $\iota_\sC:\sC\to \Ab(\sC)$ such that any additive functor $\sC\to \sA$, where $\sA$ is an abelian category, extends through $\iota_\sC$ to an exact functor $\Ab(\sC)\to \sA$, unique up to unique equivalence of categories. The functor $\iota_\sC$ is fully faithful.

\begin{lemma}\label{l1}
a) The category $\sC$ generates $\Ab(\sC)$ as an abelian category in the following sense: if $\sA\subseteq \Ab(\sC)$ is a strictly full abelian subcategory such that the inclusion functor is exact and $\sA$ contains $\iota_\sC(\sC)$, then $\sA=\Ab(\sC)$. In particular, any object of $\Ab(\sC)$ is a subquotient of $\iota_\sC (C)$ for some $C\in \sC$.\\
b) $Z(\sC)\iso Z(\Ab(\sC))$.
\end{lemma}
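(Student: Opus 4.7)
For part (a), I would handle the abstract generation statement and the concrete ``subquotient'' consequence separately. For the generation statement, I rely on the universal property of Freyd's construction. Given a strictly full abelian subcategory $\sA\subseteq \Ab(\sC)$ with exact inclusion $i\colon \sA\inj \Ab(\sC)$ containing $\iota_\sC(\sC)$, the functor $\iota_\sC$ factors as $\sC\to\sA\by{i}\Ab(\sC)$. Applying universality to the additive functor $\sC\to\sA$ produces an exact functor $r\colon \Ab(\sC)\to \sA$ extending it, and the composite $i\circ r\colon \Ab(\sC)\to \Ab(\sC)$ is again exact and restricts to $\iota_\sC$ on $\sC$. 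The uniqueness clause in the universal property forces $i\circ r\simeq \id_{\Ab(\sC)}$, so every $X\in\Ab(\sC)$ is isomorphic to $i(r(X))\in\sA$; strictness of $\sA$ then gives $X\in\sA$, and hence $\sA=\Ab(\sC)$.

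For the ``in particular'' assertion, I would invoke the explicit description of $\Ab(\sC)$ recalled at the beginning of the section (\cite{freyd}, \cite[2.10]{krause}, \cite[Th.~4.1 and Cor.~4.2]{prest}): every object $F\in\Ab(\sC)$ admits a presentation by representables
\[ \iota_\sC(A)\to\iota_\sC(B)\to F\to 0. \]
Full faithfulness of $\iota_\sC$ shows that the left map is $\iota_\sC(g)$ for a unique $g\colon A\to B$ in $\sC$, whence $F=\Coker\iota_\sC(g)$ is a quotient, \emph{a fortiori} a subquotient, of $\iota_\sC(B)$.

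For part (b), the argument is purely formal. Since $\iota_\sC$ is strong monoidal (\cite{BVHP}), its unitor gives an isomorphism $\iota_\sC(\un_\sC)\iso \un_{\Ab(\sC)}$. Full faithfulness then yields
\[ Z(\sC)=\End_\sC(\un_\sC)\iso \End_{\Ab(\sC)}(\iota_\sC(\un_\sC))=\End_{\Ab(\sC)}(\un_{\Ab(\sC)})=Z(\Ab(\sC)), \]
and this is a ring isomorphism because $\iota_\sC$ is additive and respects composition.

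The only non-formal ingredient is the presentation by representables invoked in the ``in particular'' part of (a); everything else reduces to bookkeeping with the universal property, full faithfulness, and strong monoidality of $\iota_\sC$.
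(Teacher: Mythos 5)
Your argument for the generation statement in (a) is correct, and it is essentially what lies behind \cite[Lemma 4.12]{prest}, which the paper simply cites. Part (b) is also fine.

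The ``in particular'' step has a genuine gap: it is \emph{not} true that every object of $\Ab(\sC)$ admits a presentation $\iota_\sC(A)\to\iota_\sC(B)\to F\to 0$. Freyd's construction is a two-step free completion (freely adjoining cokernels and then kernels, or vice versa), and its objects are subquotients of representables, not in general quotients. If every object were such a cokernel, $\Ab(\sC)$ would coincide with the category of finitely presented additive functors on $\sC^{\op}$, which is not abelian in general and does not satisfy Freyd's universal property even when it is; compare Example \ref{ex4}, where $\Ab(R^+)$ agrees with finitely presented $R$-modules only for $R$ absolutely flat. Concretely, take $\sC=\Z^+$ and the endomorphism $2\colon\Z\to\Z$: the object $\Ker(\iota_\sC(2))$ is nonzero in $\Ab(\Z^+)$, because the exact extension of the reduction functor $\Z^+\to(\Z/2)^+$ sends $\iota_\sC(2)$ to $0$; yet it cannot be a quotient of any $\iota_\sC(\Z^n)$, as one checks against the exact extension of $\Z^+\inj\Z\text{-mod}$. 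The correct route to the ``in particular,'' as the paper indicates, is to apply the generation statement you already proved to the strictly full subcategory of all subquotients of objects $\iota_\sC(C)$: it is closed under direct sums, subobjects and quotients (any subobject or quotient of a subquotient is again a subquotient), hence is abelian with exact inclusion and contains $\iota_\sC(\sC)$, so it equals $\Ab(\sC)$.
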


\begin{proof} a) is \cite[ Lemma 4.12]{prest}, and b) is obvious by full faithfulness. In a) ``In particular'' holds because any subobject and any quotient of a subquotient is a subquotient. 
\end{proof}

\begin{defn}\label{r4} An abelian category $\sA$ is \emph{split} if $\sA\by{\iota_\sA} \Ab(\sA)$ is an equivalence of categories.
\end{defn}

\begin{prop}\label{p15} For an abelian category $\sA$, the following are equivalent:
\begin{enumerate}
\item $\sA$ is split.
\item Every additive functor from $\sA$ to an abelian category is exact.
\item Every short exact sequence splits.
\item Every object is projective.
\item Every object is injective.
\end{enumerate}
This holds in particular if $\sA$ is semisimple\footnote{Here we adopt the terminology of \cite[2.1.1]{AK2}: a preadditive category $\sA$ is semisimple if every left $\sA$-module is a direct sum of simple objects. If $\sA$ is abelian, this means  \cite[A.2.10 (10)]{AK2} that every object of $\sA$ is a finite direct sum of simple objects.}.\\
If $\sA$ is split, any full pseudo-abelian subcategory of $\sA$ is a Serre subcategory (in particular, abelian) and is split, and any Serre localisation of $\sA$ is split. The same holds when replacing ``split'' by ``semisimple''.
\end{prop}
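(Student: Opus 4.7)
The plan is to prove the fivefold equivalence by a short cycle and then dispatch the additional assertions. The conceptual step is (2)$\Rightarrow$(1), which I would handle directly via the universal property: if every additive functor from $\sA$ to an abelian category is exact, then the pair $(\sA,\id_\sA)$ itself already satisfies the universal property characterising $(\Ab(\sA),\iota_\sA)$---any additive $F:\sA\to\sB$ factors through $\id_\sA$ as itself, and is exact by hypothesis---so by uniqueness of the universal solution, $\iota_\sA$ must be an equivalence. Conversely, (1)$\Rightarrow$(2) is immediate: any additive $F$ factors uniquely through $\iota_\sA$ as an exact functor on $\Ab(\sA)$, which agrees with $F$ under the equivalence.

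For the remaining equivalences I would run the cycle (2)$\Rightarrow$(3)$\Rightarrow$(4),(5)$\Rightarrow$(3)$\Rightarrow$(2). For (2)$\Rightarrow$(3), I apply the additive functor $\Hom_\sA(A'',-):\sA\to\Ab$ to a short exact sequence $0\to A'\to A\to A''\to 0$; by (2) it is exact, so $\id_{A''}$ lifts to a splitting. The implications (3)$\Rightarrow$(4),(5) and (4) or (5)$\Rightarrow$(3) are the classical characterisations of projectivity/injectivity via the relevant lifting or extension property. Finally (3)$\Rightarrow$(2) uses the standard fact that additive functors preserve direct sums, together with the observation that a split SES is isomorphic to a direct sum. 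The semisimple case reduces at once to (3), since any subobject of a finite direct sum of simples is a direct summand.

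For the final sentence of the statement, let $\sB\subseteq\sA$ be a full pseudo-abelian subcategory with $\sA$ split. Given $B\in\sB$ and a subobject $B'\subseteq B$ in $\sA$, the sequence $0\to B'\to B\to B/B'\to 0$ splits in $\sA$, so $B'$ and $B/B'$ appear as direct summands of $B\in \sB$ and hence lie in $\sB$ by idempotent completeness; extensions of objects of $\sB$ split in $\sA$ and reduce to direct sums, again in $\sB$. Thus $\sB$ is a Serre subcategory, abelian, and (since every SES in $\sB$ is already an SES in $\sA$) split. For a Serre localisation $\sA/\sB$, every SES is, up to isomorphism, the projection of an SES in $\sA$ by \cite[p.~368, Cor.~1]{gabriel} (as cited earlier in the excerpt), which splits in $\sA$, and the projection preserves splittings. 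The same arguments apply verbatim with ``semisimple'' in place of ``split'', using that direct summands and quotients of finite direct sums of simples remain of this form. I do not anticipate a serious obstacle; the only step requiring a moment's thought is (2)$\Rightarrow$(1), where the universal property does the work almost tautologically.
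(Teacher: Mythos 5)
Your proposal is correct and follows essentially the same route as the paper's (very terse) proof, which notes only that the single nonobvious implication is (2) $\Rightarrow$ (4), proved by the Hom-functor trick --- the same trick you deploy for (2) $\Rightarrow$ (3). Your explicit justification of (2) $\Rightarrow$ (1) via the uniqueness of the pair satisfying Freyd's universal property, and your verification of the closure statements for pseudo-abelian subcategories and Serre localisations, simply spell out what the paper leaves to the reader.
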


\begin{proof}
The only possibly nonobvious point is (2) $\Rightarrow$ (4): use the Hom functor.
\end{proof}

If $\sC$ has a $\otimes$-structure, then $\Ab(\sC)$ inherits a right exact $\otimes$-structure for which $\iota_\sC$ is a  $\otimes$-functor \cite[Prop. 1.8]{BVHP}.

\begin{prop}\label{p1}  Let $\sI\subseteq \Ab(\sC)$ be minimal in Proposition \ref{p3.2}, and write $T(\sC)=\Ab(\sC)/\sI$. Then the composition  $\sC\by{\iota_\sC} \Ab(\sC)\to T(\sC)$ is $2$-universal for  $\otimes$-functors from $\sC$ to abelian $\otimes$-categories with exact tensor structure (with respect to exact  $\otimes$-functors). In particular, $\sC$ generates $T(\sC)$ in the same sense as in Lemma \ref{l1} a).
\end{prop}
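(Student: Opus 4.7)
The plan is to build the extension in two stages, following the order of the two constructions that produced $T(\sC)$ from $\sC$: first extend along Freyd's embedding $\iota_\sC: \sC \to \Ab(\sC)$, then descend through the Serre localisation $\Ab(\sC)\to T(\sC)$.

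\textbf{Stage 1.} Given a $\otimes$-functor $F: \sC\to \sA$ with $\sA$ abelian and tensor-exact, I would invoke Freyd's universal property to obtain an exact extension $\tilde F: \Ab(\sC)\to \sA$, unique up to unique isomorphism. To upgrade $\tilde F$ to a $\otimes$-functor, I would appeal to \cite[Prop.~1.10]{BVHP}: the right exact symmetric monoidal structure on $\Ab(\sC)$ is universal among targets with right exact tensor structure, and the exact tensor structure on $\sA$ a fortiori qualifies. Hence $\tilde F\in \Ex^\otimes(\Ab(\sC),\sA)$.

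\textbf{Stage 2.} Next, I would show that $\tilde F$ kills $\sI$. Since $\tilde F$ is exact and its target has an exact tensor structure, for any monomorphism $f:C\inj D$ and any $A\in \Ab(\sC)$ the morphism $1_{\tilde F(A)}\otimes \tilde F(f)$ is a monomorphism in $\sA$, so $\tilde F(K(f,A))=0$. Moreover the full subcategory $\sI_{\tilde F}:=\tilde F^{-1}(0)$ of $\Ab(\sC)$ is a Serre subcategory (by exactness of $\tilde F$) and is stable under external tensor product (because $\tilde F(B\otimes X)\cong \tilde F(B)\otimes \tilde F(X)=0$ whenever $\tilde F(X)=0$). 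By minimality of $\sI$, this gives $\sI\subseteq \sI_{\tilde F}$, so by the universal property of Serre localisation $\tilde F$ factors uniquely as $\tilde F=\bar F\circ p$ with $\bar F:T(\sC)\to \sA$ exact. The tensor structure on $T(\sC)$ furnished by Proposition \ref{p3.2} makes $p$ a $\otimes$-functor, and $\bar F$ inherits the strong monoidal structure of $\tilde F$ through the formula $\bar F(p(X)\otimes p(Y))=\bar F(p(X\otimes Y))=\tilde F(X\otimes Y)\cong \tilde F(X)\otimes \tilde F(Y)$. Uniqueness up to unique $\otimes$-natural isomorphism then follows by concatenating the uniqueness from Freyd and from the localisation.

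\textbf{Generation.} For the last assertion, let $\sA'\subseteq T(\sC)$ be a strictly full abelian subcategory containing the image of $\sC$ and whose inclusion is exact. Its preimage $p^{-1}(\sA')\subseteq \Ab(\sC)$ is a strictly full subcategory containing $\iota_\sC(\sC)$; it is abelian with exact inclusion, since $p$ is exact and $\sA'\inj T(\sC)$ is exact. By Lemma \ref{l1}~a), $p^{-1}(\sA')=\Ab(\sC)$, and essential surjectivity of $p$ combined with strict fullness of $\sA'$ then forces $\sA'=T(\sC)$. The subquotient statement is immediate.

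\textbf{Main obstacle.} The delicate point is Stage 1: ensuring that the exact extension $\tilde F$ is automatically strong monoidal rather than merely right-exact monoidal. This is exactly the situation in which \cite[Prop.~1.10]{BVHP} applies, so the work is really to check that the ``technical condition'' there is harmless when the target tensor product is exact; once this is in place, Stage 2 and the generation statement are formal consequences of exactness plus minimality.
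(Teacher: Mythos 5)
Your proof is correct and follows essentially the same route as the paper: Stage 1 invokes \cite[Prop.~1.10]{BVHP} with the same observation that exactness of the target's tensor product trivialises the technical hypothesis, and Stage 2 is a spelled-out version of the initiality statement of Proposition \ref{p3.2}, which the paper simply cites. Your argument for the generation statement (pulling back a subcategory along $p$ and applying Lemma \ref{l1}~a)) is a valid and perhaps slightly more explicit version of what the paper leaves implicit.
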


\begin{proof} Let $F:\sC\to \sA$ be a $\otimes$-functor, with $\sA\in \Ex^\otimes$. If the tensor structure of $\sA$ is exact, $F$ factors uniquely through an exact $\otimes$-functor $\tilde F:\Ab(\sC)\to \sA$. Indeed, we apply \cite[Prop. 1.10]{BVHP}: by the exactness of the tensor product, we may take $\sA^\flat=\sA$ in \loccit so its hypothesis is trivially verified. Then $\tilde F$ factors uniquely through $T(\sC)$ by  Proposition \ref{p3.2}.
\end{proof}

\begin{prop}\label{r3} Let $\sA\in\Ex^\rig$. Then $\sA$ is split if and only if $\un$ is projective.
\end{prop}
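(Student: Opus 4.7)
The plan is to use Proposition \ref{p15}: splitness of $\sA$ is equivalent to every object being projective. One direction is immediate: if $\sA$ is split, every object (hence $\un$) is projective.

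For the converse, assume $\un$ is projective. I want to show that an arbitrary $A\in \sA$ is projective. The idea is standard for rigid categories: transfer the lifting problem to $\un$ via the duality adjunction and the fact that tensor is exact (Lemma \ref{l2}). Concretely, let $p: B\surj C$ be an epimorphism in $\sA$ and $f: A\to C$ any morphism. Since $\sA$ is rigid, $f$ corresponds under the adjunction isomorphism
\[\sA(A,X)\iso \sA(\un, A^\vee\otimes X)\]
(natural in $X$) to a morphism $\tilde f: \un\to A^\vee\otimes C$. By Lemma \ref{l2} the functor $A^\vee\otimes -$ is exact, so $1_{A^\vee}\otimes p: A^\vee\otimes B\surj A^\vee\otimes C$ is still an epimorphism. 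Since $\un$ is projective by hypothesis, $\tilde f$ lifts to $\tilde g:\un\to A^\vee\otimes B$ with $(1_{A^\vee}\otimes p)\circ \tilde g = \tilde f$. Taking the adjoint of $\tilde g$ yields a morphism $g:A\to B$, and by the naturality of the adjunction in $X$ we have $p\circ g = f$. Thus $A$ is projective.

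Since $A$ was arbitrary, every object of $\sA$ is projective, so by Proposition \ref{p15} $\sA$ is split, completing the proof.

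I do not expect any significant obstacle here; the only subtlety is ensuring that the lifting diagram commutes, which follows from the naturality of the duality adjunction in its second variable and, at the level of a diagram chase, from the fact that applying $A\otimes -$ to $\tilde f$ and composing with the counit $A\otimes A^\vee \to \un$ recovers $f$ (up to the canonical isomorphisms involved in the $\otimes$-structure). The key ingredients used from the excerpt are exactly Lemma \ref{l2} and the characterisation in Proposition \ref{p15}.
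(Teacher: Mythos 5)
Your proof is correct and takes essentially the same route as the paper: the paper notes that if $\un$ is projective then $\sA(B,-)\simeq \sA(\un, B^\vee\otimes -)$ is right exact by Lemma \ref{l2}, which is exactly the content of your lifting argument via the duality adjunction and exactness of $\otimes$. Your version just spells out the diagram chase that the paper leaves implicit.
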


\begin{proof}  If $\un$ is projective, the functor $A\mapsto \sA(B,A)\simeq \sA(\un,B^\vee\otimes A)$ is right exact, since $\otimes$ is exact by Lemma \ref{l2}.
\end{proof}

\section{Main theorem}

\begin{thm}\label{t1} Let $\sC$ be a rigid additive $\otimes$-category. Then the $2$-functor
\[\sA\mapsto \Add^\otimes(\sC,\sA)\]
from $\Ex^\rig$ to $\Cat$ is $2$-representable by the category $T(\sC)$ of Proposition \ref{p1}. Moreover, the obvious functors
\[T(\sC)\to T(\sC/\sqrt[\otimes]{0})\to T((\sC/\sqrt[\otimes]{0})_\fr)\to T(((\sC/\sqrt[\otimes]{0})_\fr)^\natural)\]
are equivalences of categories, where $\sqrt[\otimes]{0}$ is the $\otimes$-ideal of $\otimes$-nilpotent morphisms \cite[Def. 7.4.1]{AK2}, $\sD_\fr$ (\resp $\sD^\natural$) is the fractional closure of a $\otimes$-category $\sD$ \cite[p. 8]{os} (\resp its pseudo-abelian envelope).
\end{thm}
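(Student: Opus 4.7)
The plan is to establish the $2$-representability first, then to deduce the chain of equivalences from uniqueness of representing objects. Given $F\in \Add^\otimes(\sC,\sA)$ with $\sA\in \Ex^\rig$, Lemma \ref{l2} makes the tensor structure on $\sA$ exact, so Proposition \ref{p1} supplies a unique exact $\otimes$-factorisation $\tilde F:T(\sC)\to \sA$. What still needs checking is that $T(\sC)$ itself is rigid, so that this factorisation actually lands in $\Ex^\rig$. Since the composition $\sC\to \Ab(\sC)\to T(\sC)$ is a $\otimes$-functor and $\sC$ is rigid, Lemma \ref{l0} places its image inside $T(\sC)_\rig$. By Proposition \ref{p3.2}, the tensor structure on $T(\sC)$ is exact, so Proposition \ref{p3.1} applies and $T(\sC)_\rig$ is a strictly full abelian subcategory whose inclusion into $T(\sC)$ is exact. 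The generation statement at the end of Proposition \ref{p1} (the analogue of Lemma \ref{l1} a) for $T(\sC)$) then forces $T(\sC)_\rig=T(\sC)$, and the $2$-representability follows.

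For the chain of equivalences, the strategy is to check that each enlargement of $\sC$ in the chain represents the \emph{same} $2$-functor $\sA\mapsto \Add^\otimes(-,\sA)$ on $\Ex^\rig$, and then invoke the uniqueness in the first part. Concretely, for any $F\in \Add^\otimes(\sC,\sA)$ with $\sA\in \Ex^\rig$: (i) by Proposition \ref{p2} b), $\sA$ is reduced, so $F$ annihilates $\sqrt[\otimes]{0}$ and factors uniquely through $\sC/\sqrt[\otimes]{0}$; (ii) by Proposition \ref{p2} c), $\sA$ is fractionally closed, so by the universal property of the fractional closure the resulting functor extends uniquely to $(\sC/\sqrt[\otimes]{0})_\fr$; (iii) since $\sA$ is abelian, hence pseudo-abelian, $F$ extends uniquely to the pseudo-abelian envelope. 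Hence all four of $\sC$, $\sC/\sqrt[\otimes]{0}$, $(\sC/\sqrt[\otimes]{0})_\fr$ and $((\sC/\sqrt[\otimes]{0})_\fr)^\natural$ represent the same $2$-functor; the canonical comparisons produced by the first part of the theorem are therefore the asserted equivalences.

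The main obstacle is the first step: controlling $T(\sC)_\rig$ requires simultaneously the exactness of $\otimes$ on $T(\sC)$ (needed to invoke Proposition \ref{p3.1}), the stability of $T(\sC)_\rig$ under kernels and cokernels, and the fact that $\sC$ generates $T(\sC)$ abelian-categorically. Once these are combined, everything else is formal: the universal property follows from Proposition \ref{p1}, and the chain of equivalences reduces to identifying $2$-representing objects of the same $2$-functor.
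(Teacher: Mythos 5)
Your proof is correct and follows essentially the same approach as the paper's: you establish rigidity of $T(\sC)$ by combining Proposition \ref{p3.1}, Lemma \ref{l0}, and the generation statement of Proposition \ref{p1}, then deduce the $2$-representability from Lemma \ref{l2} and Proposition \ref{p1}, and finally obtain the chain of equivalences from reducedness (Proposition \ref{p2} b), fractional closedness, and pseudo-abelianness of rigid abelian $\otimes$-categories, exactly as in the paper. The only notational difference is that you explicitly cite Proposition \ref{p3.2} for the exactness of $\otimes$ on $T(\sC)$ before invoking Proposition \ref{p3.1}, which is a harmless (and arguably helpful) elaboration of a step the paper leaves implicit.
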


\begin{proof} Let $T(\sC)_\rig$ be the strictly full subcategory of dualisable objects:  by Proposition \ref{p3.1}, it is abelian and the full embedding $T(\sC)_\rig\inj T(\sC)$ is exact.  Since $\sC$ is rigid, its image in $T(\sC)$ lands into $T(\sC)_\rig$ by Lemma \ref{l0}; therefore, $T(\sC)_\rig=T(\sC)$ by Proposition \ref{p1}.  Let now $\sA\in \Ex^\rig$. Its tensor structure is exact by Lemma \ref{l2}, hence, again  by Proposition \ref{p1}, any $\otimes$-functor $\sC\to \sA$ factors through $T(\sC)$, uniquely up to unique $\otimes$-equivalence.

In the last claim, the first equivalence follows from Proposition \ref{p2} b) and the second (\resp third) one from the fact that rigid abelian $\otimes$-categories are fractionally closed \cite[Lemma 3.1]{os} (\resp that abelian categories are pseudo-abelian).
\end{proof}

\begin{cor}\label{r1}
If $\sC$ is abelian in Theorem \ref{t1},  the canonical functor $\sC\to T(\sC)$ has an exact $\otimes$-retraction $\sigma_\sC$. If moreover $\sC$ is split, this functor is an equivalence of $\otimes$-categories.
\end{cor}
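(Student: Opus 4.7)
The plan is to extract $\sigma_\sC$ directly from the universal property proved in Theorem \ref{t1}. Since $\sC$ is a rigid abelian $\otimes$-category by hypothesis, we have $\sC \in \Ex^\rig$, and the identity functor $\id_\sC$ is an object of $\Add^\otimes(\sC,\sC)$. The $2$-representability then provides an exact $\otimes$-functor $\sigma_\sC : T(\sC) \to \sC$, unique up to unique $\otimes$-isomorphism, together with a $\otimes$-isomorphism $\sigma_\sC \circ \iota_\sC \simeq \id_\sC$ where $\iota_\sC : \sC \to T(\sC)$ denotes the canonical functor. This is the desired retraction, establishing the first claim.

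For the second claim, suppose $\sC$ is split. By Definition \ref{r4}, the fully faithful functor $\sC \to \Ab(\sC)$ is then an equivalence of additive categories. Since this functor is a $\otimes$-functor for the right exact $\otimes$-structure on $\Ab(\sC)$ constructed in \cite[Prop. 1.8]{BVHP}, transporting along this equivalence identifies the $\otimes$-structure of $\Ab(\sC)$ with the given one on $\sC$. But the latter is bi-exact by Lemma \ref{l2}, since $\sC$ is a rigid abelian $\otimes$-category. Consequently the $\otimes$-structure on $\Ab(\sC)$ is exact as well, so every kernel $K(f,A) = \Ker(1_A \otimes f)$ attached to a monomorphism $f$ vanishes in $\Ab(\sC)$.

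It follows that the minimal Serre subcategory $\sI$ of Proposition \ref{p3.2} — generated by the objects $K(f,A)$ and closed under external tensor product — is the zero subcategory. Therefore $T(\sC) = \Ab(\sC)/\sI = \Ab(\sC)$, and the composite $\iota_\sC : \sC \to \Ab(\sC) \to T(\sC)$ is an equivalence of $\otimes$-categories.

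There is no real obstacle here: the argument is a clean application of Theorem \ref{t1} combined with the observation that, when $\sC$ is split and rigid abelian, the Freyd envelope already coincides with $\sC$ and inherits an automatically exact tensor product, which leaves no nontrivial ideal to quotient out.
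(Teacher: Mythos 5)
Your proof is correct and follows the same route as the paper's: extracting the retraction $\sigma_\sC$ by applying the universal property of $T(\sC)$ to $\Id_\sC$, and for the second claim unwinding Definition \ref{r4} to see that $\Ab(\sC)\simeq\sC$ is rigid abelian with exact tensor, hence $\sI=0$ and $T(\sC)=\Ab(\sC)\simeq\sC$. You have merely filled in the details that the paper leaves implicit.
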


\begin{proof} The first claim follows from the universal property of $T(\sC)$ applied to $\Id_\sC:\sC\to \sC$. The second one follows from the definition of split (Definition \ref{r4}).
\end{proof}

\begin{rks}  a) One can presumably extend Theorem \ref{t1} to not necessarily rigid additive $\otimes$-categories by using  \cite[App. B.3]{del}.\\
b) For $\sC$ as in Theorem \ref{t1}, $\Ab(\sC)$ is rigid if and only if its tensor structure is exact. Necessity follows from Lemma \ref{l2}; if conversely the tensor structure of $\Ab(\sC)$ is exact, then $\Ab(\sC)\iso T(\sC)$ which is rigid by (the proof of) Theorem \ref{t1}.\\
c) The category $(\sC/\sqrt[\otimes]{0})_\fr)^\natural$ of Theorem \ref{t1} is reduced, fractionally closed and pseudo-abelian.\footnote{We thank Peter O'Sullivan for confirming that the fractional closure of a reduced $\otimes$-category is reduced.}\\
\end{rks}

\begin{ex}\label{ex2} Suppose that, in $\sC$, there exists a nilpotent endomorphism with nonzero trace. Then $T(\sC)=0$ because, in rigid abelian $\otimes$-categories, any nilpotent endomorphism has trace $0$ \cite[Prop. 7.3.3]{AK2}. An example where this happens is given in \cite[\S 5.8]{dS}.
\end{ex}

\begin{ex}\footnote{This example was found independently by P. O'Sullivan \cite{os4}.}\label{ex4}  Let $R^+$ be the additive completion of a commutative ring $R$ considered as a preadditive category (the objects of $R^+$ are $R^n$), provided with its canonical $\otimes$-structure. By \cite[Prop. 5]{olivier}, there exists a ring homomorphism $R\to R^\abs$ which is universal for homomorphisms from $R$ to absolutely flat rings. We claim that $T(R^+)= R^\abs\text{-mod}$, where the latter is the (abelian) rigid $\otimes$-category of finitely presented (equivalently, finitely generated projective) $R^\abs$-modules. If $R$ is absolutely flat, this follows from \cite[Prop. 10.2.38]{prest3} and Remark 5.3 b), the first reference showing that $\Ab(R^+)\iso R\text{-mod}$ in this case.  In general, we use the fact that $Z(\sA)$ is absolutely flat for any $\sA\in \Ex^\rig$, Proposition \ref{p2} f). For such $\sA$, a $\otimes$-functor $F:R^+\to \sA$ amounts to an $R$-module structure on $\un_\sA$.  Thus $Z(\sA)$ is an $R^\abs$-algebra, so $F$ factors uniquely through $(R^\abs)^+$, hence through $R^\abs\text{-mod}$ as promised.
\end{ex}

\section{Comparisons}\label{s7}

\subsection{Abelian $\otimes$-envelopes}
Let $\Add^\rig_f$ be the sub-$2$-category of $\Add^\rig$ restricted to the $\sC$'s such that $Z(\sC)$ is a field and to \emph{faithful} functors. Similarly, let $\Ex^\rig_f$ be the 1-full, 2-full sub-$2$-category of $\Ex^\rig$ determined by those $\sA\in \Ex^\rig$ such that $Z(\sA)$ is a field; note that in $\Ex^\rig_f$ all exact $\otimes$-functors are automatically faithful by Proposition \ref{l3}, so $\Ex^\rig_f$  is contained in $\Add^\rig_f$.  Coulembier as well as O'Sullivan consider the universal property of Theorem \ref{t1} restricted to these $2$-categories.  This has an advantage and a drawback:

\begin{itemize}
\item  By Proposition \ref{l3},  a solution to this universal problem is automatically an envelope (an idempotent construction).
\item Such a solution is much more difficult to construct (when it exists, see Example \ref{ex2}).
\end{itemize}

Nevertheless, both authors provide a solution in special cases, by very different methods.

To formalise things, let us set up a definition:

\begin{defn}\label{df77}
Let $\sC\in \Add^\rig_f$. An \emph{abelian $\otimes$-envelope} of $\sC$ is a category $E(\sC)\in \Ex^\rig_f$ which $2$-represents the $2$-functor 
\[\sA\mapsto \Add^\rig_f(\sC,\sA)\]
from $\Ex^\rig_f$ to $\Cat$.
\end{defn}

Note that $Z(E(\sC))$ must be a field extension of $Z (C)$ by definition. If $\sC\to E (\sC)$ is also full then we have
\begin{equation}\label{eq.coul}
Z(E(\sC)) = Z (\sC). 
\end{equation}

\begin{prop}\label{l5} a) $E (\sC)$ exists if and only if 
\begin{thlist}
\item there exists a faithful $\otimes$-functor $F: \sC\to \sA$ with $\sA\in \Ex^\rig_f$;
\item the (Serre) kernel $\sS$ of the induced functor $T (\sC)\to\sA$ does not depend on $(\sA, F)$.  
\end{thlist}
In this case, $\sC$ is integral and $\sC\to T (\sC)$ is faithful. \\
b) Any abelian $\sC$ is its own abelian $\otimes$-envelope. 
\smallskip

In particular, if $E(\sC)$ exists  we have a canonical localisation $\otimes$-functor
\begin{equation}\label{eq2}
T(\sC)\to E(\sC).
\end{equation}
\end{prop}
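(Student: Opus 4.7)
The plan is to take $E(\sC) := T(\sC)/\sS$ in the ``if'' direction and to chase universal properties throughout.

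For the ``only if'' direction of (a), I would assume $E(\sC)$ exists with structural functor $j:\sC \to E(\sC)$. Then $j$ is faithful by membership in $\Add^\rig_f$, giving (i) with $(\sA,F) = (E(\sC), j)$. For any other faithful $F:\sC\to\sA$ with $\sA\in\Ex^\rig_f$, the $2$-representability supplies a unique exact $\otimes$-functor $G:E(\sC)\to\sA$ with $Gj\simeq F$; since $Z(E(\sC))$ is a field, $G$ is faithful by Proposition \ref{l3}. Theorem \ref{t1} applied to $j$ yields an exact $\otimes$-functor $\pi:T(\sC)\to E(\sC)$, and uniqueness of the factorisation through $T(\sC)$ forces $\tilde F = G\circ\pi$. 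Hence $\Ker(\tilde F) = \Ker(\pi)$ as Serre subcategories (because $G$ is faithful, so has zero object-kernel), which is manifestly independent of $(\sA, F)$, yielding (ii). The ``in this case'' claims are then immediate: faithfulness of $j$ passes back to $\sC\to T(\sC)$, and integrality of $E(\sC)$ (Proposition \ref{p2} a)) transfers to $\sC$ through the faithful $\otimes$-functor $j$.

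For the ``if'' direction I would set $E(\sC) := T(\sC)/\sS$, with $\sS$ the common Serre kernel. As the kernel of an exact $\otimes$-functor to a rigid abelian $\otimes$-category, $\sS$ is automatically stable under external tensor product (use Lemma \ref{l2}), so Proposition \ref{p4} equips $T(\sC)/\sS$ with a rigid abelian $\otimes$-structure and a canonical $\otimes$-localisation $\pi:T(\sC)\to T(\sC)/\sS$. The main obstacle, as I see it, is verifying that $Z(T(\sC)/\sS)$ is a field. Fixing any $(\sA,F)$ from (i), the exact $\otimes$-functor $\tilde F:T(\sC)\to\sA$ has kernel exactly $\sS$, hence factors through an exact $\otimes$-functor $\bar F:T(\sC)/\sS\to\sA$ with zero object-kernel; a standard image argument (an exact functor with zero object-kernel is faithful) then gives faithfulness of $\bar F$, so $Z(T(\sC)/\sS)$ injects into the field $Z(\sA)$, presenting it as a domain, which Proposition \ref{p2} d) upgrades to a field. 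Thus $T(\sC)/\sS\in\Ex^\rig_f$, and the universal property is now formal: any faithful $F:\sC\to\sA$ in $\Add^\rig_f$ lifts through $T(\sC)$ via Theorem \ref{t1}, has kernel exactly $\sS$ by hypothesis (ii), and so descends uniquely to an exact $\otimes$-functor $E(\sC)\to\sA$.

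For (b), Proposition \ref{l3} identifies $\Add^\rig_f(\sC,\sA)$ with $\Ex^\rig_f(\sC,\sA)$ whenever $\sC,\sA\in\Ex^\rig_f$, so any abelian $\sC\in\Add^\rig_f$ tautologically $2$-represents $\sA\mapsto\Add^\rig_f(\sC,\sA)$ via the identity. Finally, the canonical localisation $\otimes$-functor \eqref{eq2} is precisely the functor $\pi$ constructed in the ``if'' direction (or, equivalently, the one extracted from Theorem \ref{t1} applied to the canonical functor $\sC\to E(\sC)$).
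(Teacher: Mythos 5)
Your proof is correct and follows essentially the same route as the paper: take $E(\sC) := T(\sC)/\sS$, use Proposition~\ref{p4} for rigidity, Proposition~\ref{p2}~d) (via the subring embedding $Z(E(\sC)) \hookrightarrow Z(\sA)$) to see $Z(E(\sC))$ is a field, and Proposition~\ref{l3} for part b). The only difference is cosmetic: you spell out the ``only if'' direction (which the paper dismisses as ``obviously necessary'') and explicitly note that $\sS$ is automatically a $\otimes$-ideal, steps the paper leaves implicit.
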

\begin{proof} 

a) Conditions (i) and (ii) are obviously necessary. Conversely,
assume (i) and (ii). Then $E (\sC)= T (\sC)/\sS$ is rigid by Proposition \ref{p4}. Moreover, $Z(E (\sC))$ is a subring of the field $Z(\sA)$ and therefore it is a field by Proposition \ref{p2} d). Thus $E (\sC)\in \Ex^\rig_f$.  Clearly, any functor $F$ as in (i) factors uniquely through $E(\sC)$, so $E(\sC)$ satisfies the universal property. Finally, the induced functor $\sC\to E (\sC)$ is faithful because its composition with $E (\sC)\to \sA$ is faithful for $\sA$ as in (i).

The integrality of $\sC$ follows from Proposition \ref{p2} a) and the faithfulness is obvious. 

b) As explained above, this follows from Proposition \ref{l3}. 

The last remark follows from the proof of a).
\end{proof}

Suppose that $E(\sC)$ exists. 
Then \eqref{eq2} is an equivalence if and only if \emph{any} additive $\otimes$-functor from $\sC\in \Add^\rig_f$ to $\sA\in \Ex^\rig$ is faithful. For example, the category of representations of an affine group scheme $G$ is abelian hence its own envelope by Proposition \ref{l5} b), but \eqref{eq2} is not an equivalence if $G$ is not proreductive by Example \ref{ex1} below. 
 
\begin{prop}\label{p6} a) If $E(\sC)$ exists, \eqref{eq2} is an equivalence of categories if and only if $Z(T(\sC))$ is a field.\\
b) Suppose that $Z(T(\sC))$ is a field. Then $T(\sC)$ is the abelian $\otimes$-envelope of $\sC/I$, where $I$ is the (additive) kernel of $\sC\to T(\sC)$. In particular,  $T(\sC)$ is an abelian $\otimes$-envelope of $\sC$ if and only if $\sC\to T(\sC)$ is faithful. 
\end{prop}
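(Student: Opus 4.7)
My plan is to combine two tools already established in the paper: the automatic faithfulness (Proposition \ref{l3}) of exact $\otimes$-functors between rigid abelian $\otimes$-categories whose centres are fields, and the universal property of $T(\sC)$ from Theorem \ref{t1}.

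For (a), the forward direction is immediate since an equivalence $T(\sC)\iso E(\sC)$ places $T(\sC)$ in $\Ex^\rig_f$. For the converse, I would assume $Z(T(\sC))$ is a field, so that both $T(\sC)$ and $E(\sC)$ lie in $\Ex^\rig_f$. The canonical localisation \eqref{eq2} is then an exact $\otimes$-functor between such categories, hence faithful by Proposition \ref{l3}. I would then observe that a faithful Serre localisation $T(\sC)\to T(\sC)/\sS$ forces $\sS=0$: for any $X\in\sS$, the identity $\id_X$ is sent to $\id_X=0$ in $T(\sC)/\sS$, so faithfulness would force $\id_X=0$ in $T(\sC)$ and hence $X=0$.

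For (b), I would work under the standing hypothesis $\sC\in\Add^\rig_f$ of the section. The additive kernel $I$ of $\sC\to T(\sC)$ is automatically a $\otimes$-ideal (kernels of $\otimes$-functors always are), so $\sC/I$ inherits a rigid $\otimes$-structure and $\sC\to\sC/I$ is a full $\otimes$-functor. The subtle step I expect to be the main obstacle is checking that $\sC/I\in\Add^\rig_f$, i.e.\ that $Z(\sC/I)$ is a field. My argument is that the ring homomorphism $Z(\sC)\to Z(T(\sC))$ sends $1$ to $1$ and its source is a field, hence it must be injective; thus $I\cap Z(\sC)=0$ and $Z(\sC/I)\iso Z(\sC)$ is a field.

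To conclude, I would verify the universal property of $E(\sC/I)$ for $T(\sC)$. Given a faithful $\otimes$-functor $F:\sC/I\to\sA$ with $\sA\in\Ex^\rig_f$, the composite $\sC\to\sC/I\to\sA$ extends uniquely by Theorem \ref{t1} to an exact $\otimes$-functor $\tilde F:T(\sC)\to\sA$, which is automatically faithful by Proposition \ref{l3}. Conversely, any exact $\otimes$-functor $\tilde F:T(\sC)\to\sA$ restricts to a $\otimes$-functor $\sC\to\sA$ killing $I$, hence factoring through $\sC/I\to\sA$; the latter is faithful because it is the composition of the two faithful maps $\sC/I\to T(\sC)$ and $\tilde F$. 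Uniqueness up to unique $\otimes$-equivalence is inherited from Theorem \ref{t1}. The final assertion is then formal: $T(\sC)\iso E(\sC)$ iff $\sC=\sC/I$ iff $I=0$ iff $\sC\to T(\sC)$ is faithful.
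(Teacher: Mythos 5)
Your proof is correct and follows essentially the same approach as the paper, which dispatches the whole proposition in one line (``follows once again from Proposition \ref{l3}''): the key mechanism in both cases is that an exact $\otimes$-functor out of a rigid abelian $\otimes$-category with field centre is automatically faithful. You have merely spelled out the routine verifications (faithful Serre localisation is trivial, $Z(\sC)\to Z(T(\sC))$ injective because the source is a field, the two directions of the universal property of $E(\sC/I)$), all of which check out.
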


\begin{proof} All this follows once again from Proposition \ref{l3}.
\end{proof}

\subsection{Coulembier's work}
Coulembier's condition for an envelope \cite[Def. 1.3.4]{coul1} is Definition \ref{df77} plus the requirement that \eqref{eq.coul} holds. He proves:

\begin{thm}[\protect{\cite[Th. A]{coul1}}]\label{t5} Let $\sC\in \Add^\rig_f$. Then $E(\sC)$ exists, with property \eqref{eq.coul}, provided every morphism $f$ in $\sC$ is split by a strongly faithful object in $\sC$. 
\end{thm}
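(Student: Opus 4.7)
The plan is to apply Proposition~\ref{l5}(a), so I must exhibit a faithful $\otimes$-functor from $\sC$ into some $\sA\in\Ex^\rig_f$ and then show the resulting Serre $\otimes$-kernel in $T(\sC)$ is independent of $(\sA,F)$.

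First I would unpack the splitting hypothesis inside $T(\sC)$. If a morphism $f$ of $\sC$ is split by a strongly faithful object $X$, then $X\otimes f$ admits a section (or retraction), a datum that survives every $\otimes$-functor; strong faithfulness of $X$ (rigidity of $X$ suffices to propagate faithfulness along $X\otimes -$ in every rigid target) then forces $f$ itself to be nonzero in every nonzero rigid abelian target. Under the blanket hypothesis this shows that $\iota_\sC:\sC\to T(\sC)$ is faithful. Next, by Zorn's lemma I pick a Serre $\otimes$-ideal $\sS\subsetneq T(\sC)$ maximal with respect to the condition $\sS\cap \iota_\sC(\sC)=0$. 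By Proposition~\ref{p4}, the quotient $E(\sC):=T(\sC)/\sS$ lies in $\Ex^\rig$; maximality of $\sS$ combined with Proposition~\ref{l4}(b) forces $Z(E(\sC))$ to have no nontrivial idempotents, so it is a field by Proposition~\ref{p2}(e) applied to the image of $Z(\sC)$. Hence $E(\sC)\in\Ex^\rig_f$ and $\sC\to E(\sC)$ is faithful by construction, supplying condition (i) of Proposition~\ref{l5}(a).

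For condition (ii), consider any faithful $\otimes$-functor $F:\sC\to \sA\in\Ex^\rig_f$ with induced Serre kernel $\sS_F\subseteq T(\sC)$. Faithfulness of $F$ on $\sC$ forces $\sS_F\cap \iota_\sC(\sC)=0$, and hence $\sS_F\subseteq \sS$ by maximality of $\sS$. For the reverse inclusion: by Lemma~\ref{l1}(a), every object and morphism of $T(\sC)$ is a subquotient of one coming from $\sC$; using the splitting hypothesis on the structural morphisms presenting an element $g\in \sS$ as such a subquotient, one tracks its image through $F$ using split monos/epis of strongly faithful tensor factors, obtaining that $F(g)=0$ is forced whenever $g\in \sS$. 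This pins down $\sS_F=\sS$ independently of $(F,\sA)$, and Proposition~\ref{l5}(a) now produces $E(\sC)$. Finally, property~\eqref{eq.coul} follows because $\sC\to E(\sC)$ is faithful with $Z(\sC)$ already a field, $Z(T(\sC))\twoheadrightarrow Z(E(\sC))$ is surjective by Proposition~\ref{p4}, and the splitting hypothesis realises every new central element by a morphism of $\sC$, forcing $Z(\sC)\iso Z(E(\sC))$.

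The main obstacle is the reverse inclusion $\sS\subseteq \sS_F$, i.e.\ the independence of the Serre kernel from the choice of $(F,\sA)$: this is really the heart of Coulembier's theorem, and the place where the splitting hypothesis does essential work, by providing enough ``rigid witnesses'' in $\sC$ to pin down a canonical kernel in every rigid abelian target with field centre. I would expect this step to require a careful dévissage along the subquotient presentation of objects of $T(\sC)$ given by Lemma~\ref{l1}(a), controlling which morphisms can be killed through the behaviour of strongly faithful tensor factors -- the remaining steps are essentially formal consequences of the machinery developed in Sections~2--4.
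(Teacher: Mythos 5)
The paper does not prove Theorem~\ref{t5}: it is a citation of \cite[Th.\ A]{coul1}, stated for context and comparison. More importantly, the authors explicitly remark, immediately after Theorem~\ref{t4}: \emph{``As far as we know, and in spite of Propositions \ref{l5} and \ref{p6}, Theorem \ref{t1} does not imply either Theorem \ref{t5} or Theorem \ref{t4} in any obvious way!''} Your proposal attempts precisely what that remark rules out --- deriving Coulembier's theorem from the $T(\sC)$ machinery via Proposition~\ref{l5}(a) --- and you yourself flag the crux (the independence $\sS_F=\sS$ of the Serre kernel, or equivalently the inclusion $\sS\subseteq\sS_F$) as an unresolved ``main obstacle.'' That is exactly where the argument would have to contain new content, and the sketch of a ``dévissage along subquotient presentations'' is not a proof: nothing in Sections 2--4 of the paper controls how a morphism of $T(\sC)$ that is zero in one rigid abelian quotient with field centre behaves in another such quotient.

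Beyond that central gap, several intermediate steps are also shaky. You read ``split by $X$'' as meaning $1_X\otimes f$ has a section or retraction, but the definition in the paper (and in \cite{coul1}) is the von Neumann regular condition $h g h=h$ with $h=1_X\otimes f$; a split morphism in that sense can still be sent to zero by a $\otimes$-functor (e.g.\ one that kills $X$), so the claimed faithfulness of $\sC\to T(\sC)$ does not follow from the first paragraph as written. In the Zorn argument, maximality of $\sS$ among Serre $\otimes$-ideals with $\sS\cap\iota_\sC(\sC)=0$ does not by itself rule out a nontrivial idempotent in $Z(T(\sC)/\sS)$: if $T(\sC)/\sS\simeq\sA_1\times\sA_2$, maximality only gives that each projection kernel $\sS_i\supsetneq\sS$ meets $\iota_\sC(\sC)$ nontrivially, which yields nonzero $C_i\in\sC$ with $\iota_\sC(C_i)\in\sS_i$ but no contradiction; also Proposition~\ref{p2}(e) requires a Noetherian hypothesis you have not verified. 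Finally, even granting existence, property~\eqref{eq.coul} ($Z(E(\sC))=Z(\sC)$) is not a formal consequence of faithfulness and surjectivity of $Z(T(\sC))\twoheadrightarrow Z(E(\sC))$; in \cite{coul1} fullness of $\sC\to E(\sC)$ is part of what is proved, and your last sentence merely asserts it. So the proposal is not a proof of Theorem~\ref{t5}; to actually prove it one must work along Coulembier's lines in \cite{coul1} rather than through $T(\sC)$.
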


Here, $X\in \sC$ is \emph{strongly faithful} if $X \otimes -: \sC \to \sC$ reflects all kernels and cokernels in $\sC$, and a morphism $f:X\to Y$ in $\sC$ is \emph{split} if there exists $g:Y\to X$ such that $f\circ g\circ f=f$. ``Split by $X$'' means that $1_X\otimes f$ is split.

See \cite[Th. 4.1.1 (a)]{coul1} for another sheaf-theoretic sufficient condition.

\subsection{O'Sullivan's work} Let $\sC\in \Add^\rig$ be essentially small, $\Q$-linear, integral (see Proposition \ref{p2} a)) and Schur-finite. (In particular, the integrality hypothesis implies that $Z(\sC)$ is a field.) According to \cite[Def. 10.2]{os}, we call such a category \emph{pseudo-Tannakian}, and \emph{super Tannakian} if it is abelian. 

Write $\Add^\rig_t$ for the $1$-full and $2$-full subcategory of $\Add^\rig_f$ formed of pseudo-Tannakian categories, and $\Ex^\rig_t$ or the $1$-full and $2$-full subcategory of $\Ex^\rig$ formed of super Tannakian categories. We have:

\begin{thm}\label{t4} For any $\sC\in \Add^\rig_t$, the $2$-functor
\[\sA\mapsto \Add^\rig_t(\sC,\sA)\]
from $\Ex^\rig_t$ to $\Cat$ is $2$-representable.
\end{thm}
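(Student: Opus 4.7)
My plan is to parallel Proposition \ref{l5}, adapted to the super Tannakian context, and to extract the representing object as a Serre localisation of $T(\sC)$.

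Since any $F\in \Add^\rig_t(\sC,\sA)$ is a $\otimes$-functor from $\sC$ to an abelian $\otimes$-category with exact tensor structure (by Lemma \ref{l2}), Proposition \ref{p1} produces a unique exact $\otimes$-extension $\tilde F:T(\sC)\to \sA$ with Serre kernel $\sS_F$. Define
\[\sS := \bigcap_{(F,\sA)}\sS_F,\]
the intersection taken over all $F\in \Add^\rig_t(\sC,\sA)$ with $\sA\in \Ex^\rig_t$. Arbitrary intersections of Serre subcategories closed under external tensor product retain both properties, so by Proposition \ref{p4} the candidate $E^{sT}(\sC):=T(\sC)/\sS$ is a rigid abelian $\otimes$-category with exact tensor structure.

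The critical input is that $\sS\ne T(\sC)$, \ie the existence of at least one faithful $\otimes$-functor from $\sC$ to some $\sA\in \Ex^\rig_t$. This is precisely the super-Tannakian recognition available for $\Q$-linear, essentially small, Schur-finite, rigid $\otimes$-categories with $Z(\sC)$ a field of characteristic $0$ (Deligne's theorem, as reformulated and used by O'Sullivan in \cite{os}); it provides, for instance, a faithful $\otimes$-functor to the super-representations of a suitable affine super group-scheme. Without this input the construction would collapse.

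Granted this, I would verify that $E^{sT}(\sC)\in\Ex^\rig_t$. Rigidity and exactness of $\otimes$ come from Proposition \ref{p4}, $\Q$-linearity is inherited, and integrality follows because $Z(E^{sT}(\sC))$ embeds into $Z(\sA)$ for any $(\sA,F)$ as above, hence is a subring of a field, hence a field by Proposition \ref{p2} d). The universal property is then formal: given $F\in\Add^\rig_t(\sC,\sA)$, the factorisation $\tilde F:T(\sC)\to\sA$ has $\Ker\tilde F\supseteq \sS$ by construction, so descends uniquely to an exact $\otimes$-functor $E^{sT}(\sC)\to\sA$; uniqueness up to unique $\otimes$-isomorphism follows since $\sC$ generates $E^{sT}(\sC)$ in the sense of Lemma \ref{l1} a) (via Proposition \ref{p1} and the fact that exact localisations preserve such generation).

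The main obstacle is Schur-finiteness of $E^{sT}(\sC)$. By Lemma \ref{l1} a) and Proposition \ref{p1}, every object of $E^{sT}(\sC)$ is a subquotient of some $\iota_\sC(C)$, $C\in\sC$; since $C$ is annihilated by some Schur functor and $\otimes$-functors commute with Schur functors, so is $\iota_\sC(C)$. To pass to subquotients I would invoke Deligne's theorem that, in a $\Q$-linear rigid abelian $\otimes$-category, the Schur-finite objects form a Serre $\otimes$-subcategory. This is the nontrivial external ingredient one must import; everything else in the proof is bookkeeping around Propositions \ref{p1}--\ref{p4} and \ref{l5}.
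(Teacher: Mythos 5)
Your strategy---build the super Tannakian hull as a Serre localisation $T(\sC)/\sS$ with $\sS=\bigcap_{(F,\sA)}\sS_F$---is precisely the route the paper explicitly flags as not obviously viable: the remark just after Theorem \ref{t4} states that ``in spite of Propositions \ref{l5} and \ref{p6}, Theorem \ref{t1} does not imply either Theorem \ref{t5} or Theorem \ref{t4} in any obvious way.'' The paper's own proof is a citation to O'Sullivan (\cite[Lemma 10.7 and Th.~10.10]{os}), whose construction goes through universal super fibre functors and Deligne's internal recognition of super Tannakian categories, not through $T(\sC)$.

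The concrete gap is your claim that $Z(E^{sT}(\sC))$ embeds into $Z(\sA)$ for every $(F,\sA)$, hence is a field by Proposition \ref{p2}~d). The induced functor $T(\sC)/\sS\to\sA$ is exact, but its Serre kernel is $\sS_F/\sS$, which is nonzero whenever $\sS\subsetneq\sS_F$; an exact $\otimes$-functor with nontrivial kernel gives no injectivity on $Z$. The obstruction is real, not cosmetic: if $E^{sT}(\sC)\simeq\sB_1\times\sB_2$ were a nontrivial decomposition, Lemma \ref{l10} forces each factored $\tilde F_0$ to kill $\sB_1$ or $\sB_2$, but different $F$'s can kill different sides, and this is entirely compatible with $\bigcap_F(\sS_F/\sS)=0$. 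So taking the intersection does not rule out nontrivial idempotents in $Z(E^{sT}(\sC))$, i.e.\ does not give integrality, i.e.\ does not place $E^{sT}(\sC)$ in $\Ex^\rig_t$. In Proposition \ref{l5} this problem is sidestepped by \emph{hypothesis} (ii)---that $\sS_F$ is independent of $(F,\sA)$---which you have dropped and which is exactly what makes the localised category land in $\Ex^\rig_f$. Without an argument replacing it, the proof does not go through; the Schur-finiteness step is a secondary concern that only matters once integrality is secured.
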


\begin{proof} This is \cite[Lemma 10.7 and Th. 10.10]{os}.
\end{proof}

\begin{rk} Let $ST(\sC)$ be the solution of the above universal problem.  It can be proven that $ST(\sC)=E(\sC)$. See \cite[Thm. 5.5.]{krig}\end{rk}

\begin{rks} a) As pointed out by O'Sullivan, any Kimura category verifying the conditions of Theorem \ref{t5} is semi-simple.\\ 
b) As far as we know,  and in spite of Propositions \ref{l5} and \ref{p6}, Theorem \ref{t1} does not imply either Theorem \ref{t5} or Theorem \ref{t4} in any obvious way!
\end{rks}

\section{The split quotient of $T (\sC)$}

\subsection{The ideal $\sN$}
Let $\sC\in \Add^\rig$.  Recall from \cite[7.1]{AK2} the $\otimes$-ideal $\sN_{\sC}\subseteq \sC$ of morphisms universally of trace $0$: for $A,B\in \sC$, 
\[\sN_\sC(A,B)=\{f\in \sC(A,B)\mid \tr(gf)=0\ \forall g\in \sC(B,A)\}\]
where $\tr$ is the categorical trace. 

\begin{lemma}\label{l9} For any split $\sA\in \Ex^\rig$ (Definition \ref{r4}), we have $\sN_\sA=0$. Conversely, if $\sN_\sA=0$ and $Z(\sA)$ is Noetherian, then $\sA$ is split.
\end{lemma}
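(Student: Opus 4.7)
The plan is to handle the two implications separately. The forward direction reduces the vanishing of $\sN_\sA$ on a morphism $f$ to the vanishing of its image $I := \IM(f)$, by exploiting the fact that the trace is controlled by the evaluation morphism and that $\un$ is projective.

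For the forward direction, suppose $\sA$ is split and let $f \in \sN_\sA(A,B)$. Because every short exact sequence in $\sA$ splits (Proposition \ref{p15}), the epi--mono factorisation $f = ip$ with $p:A\surj I$ and $i:I\inj B$ admits a section $j:I\to A$ of $p$ and a retraction $q:B\to I$ of $i$. For each $h \in \End_\sA(I)$, set $g := jhq$; then $gf = j h p$ and cyclicity of trace yields $\tr(gf) = \tr(pjh) = \tr(h)$. Since $f \in \sN_\sA$, the trace map $\tr:\End_\sA(I)\to Z(\sA)$ is therefore identically zero. Rigidity identifies $\End_\sA(I) \cong \sA(\un, I^\vee \otimes I)$, and, via symmetry of the tensor product, under this identification $\tr$ becomes $\ev_*:\sA(\un, I^\vee \otimes I)\to \sA(\un,\un) = Z(\sA)$, where $\ev:I^\vee \otimes I\to \un$ is the evaluation.

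Now factor $\ev = \iota\circ\tilde\ev$ through its image $\iota:\IM(\ev)\inj\un$ and apply $\sA(\un,-)$, obtaining a factorisation $\sA(\un,I^\vee\otimes I) \to \sA(\un,\IM(\ev))\to Z(\sA)$ of $\ev_*$. The first arrow is surjective because $\un$ is projective by Proposition \ref{r3} (as $\sA$ is split), and the second is injective because $\iota$ is mono. So $\ev_* = 0$ forces $\sA(\un,\IM(\ev)) = 0$. By Proposition \ref{l4}, $\IM(\ev)\subseteq \un$ is a direct summand of $\un$; whenever $\IM(\ev)\neq 0$, its projection $\un\to\IM(\ev)$ is a nonzero element of $\sA(\un,\IM(\ev))$. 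Hence $\IM(\ev) = 0$, i.e.\ $\ev = 0$, and the triangle identity $(\id_I \otimes \ev)\circ(\coev \otimes \id_I) = \id_I$ forces $\id_I = 0$, so $I = 0$ and $f = ip = 0$.

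For the converse, assume $\sN_\sA = 0$ and $Z(\sA)$ Noetherian. By Proposition \ref{p2}\,e), $\sA \simeq \prod_{i=1}^n \sA_i$ with each $Z(\sA_i)$ a field, and $\sN$ is computed factor-wise, so $\sN_{\sA_i} = 0$ for every $i$. By Proposition \ref{r3} it suffices to prove that each unit $\un_{\sA_i}$ is projective. Fix $i$ and a surjection $\pi:X\surj\un_{\sA_i}$. Since $\pi\neq 0$ and $\sN_{\sA_i} = 0$, some $g:\un_{\sA_i}\to X$ satisfies $\tr(g\pi)\neq 0$; but $\tr(g\pi) = \tr(\pi g) = \pi g \in Z(\sA_i)$ (the trace of an endomorphism of $\un$ equals itself), and $Z(\sA_i)$ is a field, so $\pi g$ is invertible and $g(\pi g)^{-1}$ is a section of $\pi$. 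Hence each $\sA_i$ is split, and so is $\sA$.

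The main obstacle, as I see it, is the forward direction: the crux is recognising the trace as $\ev_*$ under the rigidity isomorphism and then using projectivity of $\un$ (the only substantive consequence of splitness that I need) to pull the vanishing of $\ev_*$ back across the image factorisation of $\ev$ until one is left with $\ev$ itself, at which point the triangle identity finishes the job.
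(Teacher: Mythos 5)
Your proof is correct but takes a different route from the paper's in both directions. For the forward implication, the paper first invokes \cite[6.1.5]{AK2} to reduce at once to showing $\sN_\sA(\un,A)=0$; for a nonzero $f\colon\un\to A$ with image $U$, it then uses \emph{injectivity} of $U$ (a direct summand of $\un$, Proposition \ref{p15}\,(5)) to retract $U\inj A$ and produce $g\colon A\to\un$ with $gf\ne 0$ in one line. You work instead with a general $f\in\sN_\sA(A,B)$ and its image $I$, show that the categorical trace vanishes identically on $\End(I)$, and then use \emph{projectivity} of $\un$ (the dual consequence of splitness) to force $\ev=0$ on $I^\vee\otimes I$, hence $I=0$ via the triangle identity. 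Your version is longer but self-contained, bypassing the citation of \cite{AK2}. For the converse, the paper proves projectivity of $\un$ by an iterative construction, building a strictly increasing chain of direct summands of $\un$ whose termination requires the Noetherianity of $Z(\sA)$. You instead deploy Noetherianity at the outset via Proposition \ref{p2}\,e) to split $\sA$ into finitely many factors with $Z(\sA_i)$ a field, after which a single application of $\sN_{\sA_i}=0$ together with cyclicity of the trace yields a section of any epimorphism onto $\un_{\sA_i}$ in one step. This is a genuine simplification: it replaces the inductive bookkeeping of the paper's argument with one reduction and one splitting.
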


\begin{proof} By \cite[6.1.5]{AK2}, it suffices to show that $\sN_\sA(\un,A)=0$ for any $A\in \sA$. Let $f:\un\to A$ be a nonzero morphism. If $U=\IM f$, $U$ is injective (Proposition \ref{p15} (5)), hence the induced monomorphism $U\to A$ has a retraction. Since $U$ is a direct summand of $\un$, this retraction yields a morphism $g:A\to \un$ such that $gf\ne 0$.

For the converse, it suffices by Proposition \ref{r3} to show that $\un$ is projective. Let $f:A\to \un$ be an epimorphism, with $A\in \sA$. Since $\sN(A,\un) = 0$, there is $g:\un \to A$ such that $fg\ne 0$. Let $U\ne 0$ be the image of $fg$.  Replace $f$  by $f_1=(1-e)f$, where $e$ is the idempotent with image $U$ in the decomposition $\un=U\oplus U^\perp$ of Proposition \ref{l4} a). Since $f$ was epi, $f_1$ is epi on $U^\perp$, hence nonzero if $U^\perp\ne 0$. Using $\sN(A,U^\perp) = 0$,  we find $g_1:U^\perp\to A$ such that $g_1f_1 \ne 0$. Iterating, we get a strictly increasing sequence of subobjects of $\un$, which must stop at $\un$ at a finite step. Collecting everything, we get a section of $f$.
\end{proof}

\begin{lemma}\label{l8}
Let $F : \sC\to \sD$ be a full $\otimes$-functor with $\sC, \sD\in \Add^\rig$. 
Let $f$ be a morphism of $\sC$. Then $f\in \sN_\sC$ $\Rightarrow$ $F(f)\in \sN_\sD$, and the converse is true if $Z(F):Z(\sC)\to Z(\sD)$ is injective. Under this condition, the induced full $\otimes$-functor 
\[ \sC/\sN_\sC \to \sD/\sN_\sD\]
is faithful. 
\end{lemma}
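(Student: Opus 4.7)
The entire lemma will follow from the standard compatibility of categorical traces with $\otimes$-functors, namely that for any endomorphism $u$ of $\sC$ one has $\tr_\sD(F(u)) = Z(F)(\tr_\sC(u))$ in $Z(\sD)$ (see \cite[\S 7]{AK2}). This is a direct consequence of $F$ being strong symmetric monoidal, hence commuting with evaluations and coevaluations on the dualisable objects it preserves by Lemma \ref{l0}.

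For the forward implication $f\in\sN_\sC\Rightarrow F(f)\in\sN_\sD$, I would take an arbitrary $h:F(B)\to F(A)$ in $\sD$ and use fullness of $F$ to write $h = F(g)$ for some $g:B\to A$. Then $h\circ F(f) = F(g\circ f)$, so $\tr_\sD(h\circ F(f)) = Z(F)(\tr_\sC(gf)) = Z(F)(0) = 0$, as $f\in\sN_\sC$. For the converse, assuming $Z(F)$ injective: for every $g:B\to A$ in $\sC$ one has $Z(F)(\tr_\sC(gf)) = \tr_\sD(F(g)\circ F(f)) = 0$ by hypothesis on $F(f)$, and injectivity of $Z(F)$ yields $\tr_\sC(gf) = 0$, hence $f\in\sN_\sC$.

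For the last assertion, Step~1 ensures that $F$ descends to a well-defined $\otimes$-functor $\bar F:\sC/\sN_\sC\to\sD/\sN_\sD$, which is full because $F$ is. A morphism $\bar f$ of $\sC/\sN_\sC$ is zero exactly when $f\in\sN_\sC$, so faithfulness of $\bar F$ is the statement that $F(f)\in\sN_\sD$ implies $f\in\sN_\sC$, which is precisely the converse established in the previous step under the hypothesis that $Z(F)$ is injective.

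I do not expect any genuine obstacle: the proof is a direct unpacking of definitions, with the only nontrivial input being the trace compatibility $\tr\circ F = Z(F)\circ \tr$, which is standard for strong monoidal functors between rigid categories. The role of fullness is isolated in Step~1 (to range over all $h$ in $\sD$), and the role of injectivity of $Z(F)$ is isolated in Step~2 (to cancel $Z(F)$ from the trace equality).
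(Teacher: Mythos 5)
Your proof is correct and is precisely the unpacking of the paper's one-line argument (``Since $\tr(F(f))=F(\tr(f))$ \ldots this lemma is trivial''); it takes essentially the same approach, with the roles of fullness and injectivity of $Z(F)$ correctly isolated.
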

\begin{proof}
Since
\[\tr(F(f))=F(\tr(f))\]
for any $f\in \sC$, this lemma is trivial.
\end{proof}

\begin{prop}\label{p7} Let $\sC\in \Add^\rig$ be such that $Z(\sC)$ is a field. Then the following conditions are equivalent:
\begin{thlist}
\item $(\sC/\sN_\sC)^\natural$ is abelian.
\item $(\sC/\sN_\sC)^\natural$ is (abelian and) split.
\item There exists $F:\sC\to \sD$ as in Lemma \ref{l8}, with $\sD$ split.
\end{thlist}
\end{prop}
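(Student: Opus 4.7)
The plan is to close a cycle around $\mathrm{(ii)}$: the implication $\mathrm{(ii)}\Rightarrow\mathrm{(i)}$ is tautological, and I will establish $\mathrm{(ii)}\Leftrightarrow\mathrm{(iii)}$ together with $\mathrm{(i)}\Rightarrow\mathrm{(ii)}$. Setting $\sA:=(\sC/\sN_\sC)^\natural$, I first observe that $\sA$ is rigid and pseudo-abelian, since quotienting a rigid $\otimes$-category by a $\otimes$-ideal and then Karoubifying both preserve rigidity. Moreover $Z(\sA)=Z(\sC)$ is a field: for an endomorphism of $\un$ the trace equals the morphism itself, so $\sN_\sC\cap Z(\sC)=0$, and pseudo-abelian completion never alters $\End(\un)$.

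For $\mathrm{(ii)}\Rightarrow\mathrm{(iii)}$, I would take $\sD:=\sA$ with $F$ the canonical $\otimes$-functor $\sC\to\sA$. The projection $\sC\to\sC/\sN_\sC$ is full (being a quotient on morphisms) and $\sC/\sN_\sC\hookrightarrow\sA$ is fully faithful, so $F$ is full; moreover $Z(F)$ is the identity of $Z(\sC)$ and hence injective. For $\mathrm{(iii)}\Rightarrow\mathrm{(ii)}$, Lemma \ref{l9} gives $\sN_\sD=0$, so Lemma \ref{l8} shows that the induced full $\otimes$-functor $\sC/\sN_\sC\to\sD$ is faithful; extending to Karoubi envelopes (and using $\sD^\natural=\sD$ since $\sD$ is already abelian) yields a full and faithful embedding $\sA\hookrightarrow\sD$, whose essential image is a full pseudo-abelian subcategory of a split category. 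Proposition \ref{p15} then forces this image to be abelian and split, whence so is $\sA$.

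The heart of the argument is $\mathrm{(i)}\Rightarrow\mathrm{(ii)}$. Assuming $\sA$ is abelian, I have $\sA\in\Ex^\rig$ with $Z(\sA)$ Noetherian, so by Lemma \ref{l9} it suffices to show $\sN_\sA=0$. The definition of $\sN_\sC$ gives $\sN_{\sC/\sN_\sC}=0$, and what remains is to propagate $\sN$-vanishing through pseudo-abelian completion. For $h\in\sN_\sA((X,e),(Y,f))$ and any $k'\colon Y\to X$ in $\sC/\sN_\sC$, the morphism $k:=ek'f\colon(Y,f)\to(X,e)$ lies in $\sA$; using that the trace of an endomorphism of $(X,e)$ in $\sA$ coincides with the trace of the same endomorphism seen in $\sC/\sN_\sC$, together with cyclicity and $he=h=fh$, one finds $\tr_{\sC/\sN_\sC}(k'h)=\tr_\sA(kh)=0$, forcing $h\in\sN_{\sC/\sN_\sC}(X,Y)=0$.

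I expect this trace bookkeeping in the Karoubi envelope to be the only delicate step; everything else is a formal consequence of the earlier Lemmas \ref{l8}, \ref{l9} and Proposition \ref{p15}.
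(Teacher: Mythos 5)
Your proof is correct and follows essentially the same route as the paper's: $(\mathrm{ii})\Leftrightarrow(\mathrm{iii})$ via Lemmas \ref{l8}, \ref{l9} and Proposition \ref{p15}, and $(\mathrm{i})\Rightarrow(\mathrm{ii})$ via the second part of Lemma \ref{l9}. The trace bookkeeping showing $\sN_{(\sC/\sN_\sC)^\natural}=0$ is a helpful detail that the paper leaves implicit when invoking Lemma \ref{l9}.
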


\begin{proof}(i) $\Rightarrow$  (ii) follows from the second part of Lemma \ref{l9}, since $Z((\sC/\sN_\sC))^\natural)\allowbreak =Z(\sC)$ is a field. (ii) $\Rightarrow$ (iii) is trivial. If (iii) holds,  the hypotheses imply that  $Z(F)$ is injective and $\sN_\sD=0$ (by the first part of Lemma \ref{l9}), thus Lemma \ref{l8} gives us a fully faithful $\otimes$-functor $\sC/\sN_\sC \to \sD$. Since $\sD$ is pseudo-abelian, it extends to a full embedding $(\sC/\sN_\sC)^\natural\inj \sD$. Finally, $(\sC/\sN_\sC)^\natural$ is abelian and split thanks to Proposition \ref{p15}. So (iii) $\Rightarrow$ (ii). 
\end{proof}

\begin{rk}\label{r2} Proposition \ref{p7} means that $\sC\to (\sC/\sN_\sC)^\natural$ is universal with respect to full $\otimes$-functors to split abelian $\otimes$-categories -- assumimg such functors exist, which fails \eg in Example \ref{ex2}. The situation is parallel to that of Proposition \ref{l5}. When $\sC$ is the category of pure motives over a field (see Section \ref{s8} below), we recover the classical fact that the Hodge conjecture or the Tate conjecture (plus semi-simplicity) implies the standard conjecture D.
\end{rk}

\subsection{A splitting} 

In this subsection, we assume that $(\sC/\sN_\sC)^\natural$ is abelian split. We write $S(\sC)$ for this category.

\begin{ex}\label{ex3} By Lemma \ref{l0} and  \cite[Th. 1 a)]{AK3},  the above hypothesis is satisfied provided $K=Z(\sC)$ is a field and  there exists an extension $L/K$ and a $K$-linear $\otimes$-functor $H : \sC\to \sV$ to a nonzero rigid $L$-linear abelian $\otimes$-category $\sV$ in which Hom groups have finite $L$-dimension.  In this case, $S(\sC)$ is even semisimple.
\end{ex}

Write $\pi$ for the $\otimes$-functor $\sC\to S(\sC)$. By  Theorem \ref{t1}, $\pi$ induces an exact $\otimes$-functor 
\begin{equation}\label{eq4}
\bar \pi:T(\sC)\to S(\sC).
\end{equation}
Let $T_0(\sC)$ be the quotient $T(\sC)/\Ker \bar\pi$ and let
$$\bar \pi_0:T_0(\sC)\to S(\sC)$$
be the induced  faithful exact $\otimes$-functor. Note that $T_0(\sC)$ is still rigid by Proposition \ref{p4}.
\begin{thm}\label{t2}  $\bar \pi_0$ is an equivalence of categories. 
\end{thm}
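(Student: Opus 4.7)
The strategy is to construct an exact $\otimes$-section $\rho: S(\sC)\to T_0(\sC)$ of $\bar\pi_0$ and show it is an equivalence. Since $\bar\pi_0$ is faithful by construction and the kernel of the functor $\sC\to S(\sC)$ is $\sN_\sC$, the kernel of the composite $\sC\to T_0(\sC)\to S(\sC)$, and hence of $\sC\to T_0(\sC)$, is $\sN_\sC$. This yields an induced faithful $\otimes$-functor $\sC/\sN_\sC\to T_0(\sC)$, which extends by the universal property of the pseudo-abelian envelope (since $T_0(\sC)$ is abelian) to a faithful $\otimes$-functor $\rho: S(\sC)=(\sC/\sN_\sC)^\natural\to T_0(\sC)$. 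Since $S(\sC)$ is split, $\rho$ is automatically exact by Proposition \ref{p15}; and by construction $\bar\pi_0\circ\rho$ agrees with $\Id_{S(\sC)}$ on $\sC/\sN_\sC$, hence is $\otimes$-naturally isomorphic to $\Id_{S(\sC)}$.

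Fullness of $\rho$ then follows quickly: given $f:\rho(X)\to\rho(Y)$ in $T_0(\sC)$, set $g=\bar\pi_0(f)$; then $\bar\pi_0(\rho(g))=g=\bar\pi_0(f)$ via $\bar\pi_0\rho\simeq\Id$, so faithfulness of $\bar\pi_0$ forces $\rho(g)=f$, and $\rho$ is fully faithful.

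The main step, and the main obstacle, is essential surjectivity of $\rho$. By Lemma \ref{l1}~a), every object of $T_0(\sC)$ is a subquotient of the image $C_0$ of some $C\in\sC$, and $C_0=\rho(\pi(C))$. It thus suffices to prove that for any $X\in S(\sC)$, every subobject $X_1\subseteq\rho(X)$ in $T_0(\sC)$ has the form $\rho(Z)$ for some subobject $Z\subseteq X$ in $S(\sC)$ (and dually for quotients). Setting $Z=\bar\pi_0(X_1)$, a subobject of $X$ by exactness of $\bar\pi_0$, one compares $X_1$ and $\rho(Z)$ as subobjects of $\rho(X)$: the composite $X_1\hookrightarrow\rho(X)\twoheadrightarrow\rho(X)/\rho(Z)\simeq\rho(X/Z)$ (using exactness of $\rho$, itself forced by $S(\sC)$ being split) is sent by $\bar\pi_0$ to the zero composite $Z\hookrightarrow X\twoheadrightarrow X/Z$, so by faithfulness it vanishes and $X_1\subseteq\rho(Z)$; the symmetric argument gives $\rho(Z)\subseteq X_1$, hence $X_1=\rho(Z)$. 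The dual case for quotients is analogous. Applying this argument twice (to $C'\subseteq C_0$ and then $C''\subseteq C'$) shows any subquotient $Y=C'/C''$ lies in the essential image of $\rho$. Hence $\rho$ is essentially surjective, therefore an equivalence, with $\bar\pi_0$ as quasi-inverse.
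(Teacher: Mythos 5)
Your proof is correct, but proceeds by a genuinely different route from the paper's. You construct an explicit exact $\otimes$-section $\rho: S(\sC)\to T_0(\sC)$ of $\bar\pi_0$: faithfulness of $\bar\pi_0$ forces the kernel of $\sC\to T_0(\sC)$ to be exactly $\sN_\sC$, the induced faithful $\otimes$-functor $\sC/\sN_\sC\to T_0(\sC)$ extends over the pseudo-abelian envelope to $\rho$, and exactness of $\rho$ comes for free from splitness of $S(\sC)$ (Proposition \ref{p15}). You then check $\rho$ is fully faithful and finish with essential surjectivity by lifting subobjects and quotients of objects $\rho(X)$ back to $S(\sC)$, using exactness of both $\rho$ and $\bar\pi_0$, faithfulness of $\bar\pi_0$, and the generating property. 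The paper instead attacks $\bar\pi_0$ head-on: it first proves fullness (Lemma \ref{l7}) by reducing, via rigidity, to surjectivity of $T_0(\sC)(\un,X)\to S(\sC)(\un,\bar\pi_0 X)$, showing that this surjectivity passes to subobjects and quotients (splitness of $S(\sC)$ gives the exactness of the bottom row of the relevant diagram), and then invoking the generating property of Proposition \ref{p1}; once $\bar\pi_0$ is fully faithful, essential surjectivity is immediate by lifting the idempotent $e$ of $(\pi(C),e)\in S(\sC)$ through the isomorphism $\End_{T_0(\sC)}(p(C))\iso\End_{S(\sC)}(\pi(C))$. Both arguments rest on the same two inputs — splitness of $S(\sC)$ and the generating property of $\sC$ in $T_0(\sC)$ — but you build an explicit quasi-inverse, while the paper proves fullness directly and gets essential surjectivity almost for free from idempotent-lifting. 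The paper's route is shorter; yours makes the section $\rho$ (which the paper's idempotent-lifting step uses only implicitly) completely explicit.
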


We first prove that $\bar \pi_0$ is full. By rigidity, this is equivalent to

\begin{lemma} \label{l7} The injection 
\[T_0(\sC)(\un , X) \by{\bar\pi_0} S(\sC)(\un , \bar \pi_0 (X))\]
is surjective for any $X\in T_0(\sC)$.
\end{lemma}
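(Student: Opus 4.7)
The plan is to exploit the fact that $\sC$ generates $T_0(\sC)$. By Proposition~\ref{p1} (``in particular'' clause), every object of $T(\sC)$ is a subquotient of $\iota_\sC(C)$ for some $C\in \sC$; since the Serre projection $T(\sC)\to T_0(\sC)$ is exact, the same holds in $T_0(\sC)$. Fix a presentation $X = Y/Y'$ with $Y'\subseteq Y\subseteq \iota_\sC(C)$ in $T_0(\sC)$.

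Step 1: lift $\phi$ to $\bar\pi_0(Y)$. Applying the exact functor $\bar\pi_0$ to the short exact sequence $0\to Y'\to Y\to X\to 0$ gives a short exact sequence in $S(\sC)$, which splits because $S(\sC)$ is abelian and split (Proposition~\ref{p15}). Choose a section $s:\bar\pi_0(X)\to \bar\pi_0(Y)$ of the quotient $q:\bar\pi_0(Y)\to \bar\pi_0(X)$ and set $\tilde\phi := s\circ \phi:\un\to \bar\pi_0(Y)$, so that $q\tilde\phi = \phi$.

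Step 2: transport $\tilde\phi$ back to $T_0(\sC)$. Composing with the inclusion $\bar\pi_0(Y)\hookrightarrow \pi(C)$ yields $\psi:\un\to \pi(C)$. The composite $\sC\to \sC/\sN_\sC\to S(\sC)$ is full---it is a $\otimes$-ideal quotient followed by the fully faithful pseudo-abelian completion---so $\psi = \pi(\phi_1)$ for some $\phi_1:\un\to C$ in $\sC$. The key claim is that $\iota_\sC(\phi_1):\un\to \iota_\sC(C)$ factors through $Y$ in $T_0(\sC)$: by construction, the composition $\un\to \iota_\sC(C)\to \iota_\sC(C)/Y$ has vanishing image under $\bar\pi_0$ (since $\psi$ factors through $\bar\pi_0(Y)$), and $\bar\pi_0$ is faithful on $T_0(\sC)$, so the composition is already zero in $T_0(\sC)$. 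Let $\tilde\psi:\un\to Y$ denote the resulting factorisation. Then $\bar\pi_0$ of the composition $\un\xrightarrow{\tilde\psi} Y\to X$ equals $q\tilde\phi = \phi$, producing the desired preimage of $\phi$.

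The main obstacle is precisely the factorisation claim in Step 2: the fullness of $\sC\to S(\sC)$ only yields a lift into the ambient $\iota_\sC(C)$, and it is the faithfulness of $\bar\pi_0$---that is, the very definition of $T_0(\sC) = T(\sC)/\Ker\bar\pi$---that forces this lift to land in the subobject $Y$. The three ingredients (splitting in $S(\sC)$, fullness of $\pi$, faithfulness of $\bar\pi_0$) play complementary and all essential roles; none can be dispensed with.
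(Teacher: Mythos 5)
Your proof is correct and uses essentially the same ingredients as the paper's: Proposition \ref{p1} (subquotient generation by $\sC$), splitness of $S(\sC)$ to produce a section, fullness of $\pi:\sC\to S(\sC)$ for the base case, and faithfulness of $\bar\pi_0$ to descend the lift into the right subobject. The paper organises this as a dévissage — surjectivity is shown to be stable under subobjects and quotients via a diagram chase — while you unfold the same chase into a direct construction of a preimage for a fixed presentation $Y'\subseteq Y\subseteq \iota_\sC(C)$; this is a presentational rather than a mathematical difference.
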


\begin{proof} Let $0\to X'\to X\to X''\to 0$ be a short exact sequence in $T_0(\sC)$. We then get the following commutative diagram 
\[\begin{CD}
0\to T_0(\sC)(\un , X')@>>> T_0(\sC)(\un , X)@>>> T_0(\sC)(\un , X'')\\
@V\bar\pi_0VV@V\bar\pi_0 VV @V\bar\pi_0VV\\
0\to S(\sC)(\un , \bar \pi_0 (X'))@>>> S(\sC)(\un , \bar \pi_0 (X))@>>> S(\sC)(\un , \bar \pi_0 (X''))\to 0
\end{CD}\]
with exact rows and vertical injections. The bottom exact row follows from the exactness of $\bar \pi_0$ and  the splitness of $S(\sC)$, granting that $S(\sC) (\un,-)$ is right exact.
If the middle vertical arrow is an epimorphism then the left-most one is by diagram chase; moreover, the right-most is an epimorphism. Therefore, the statement is stable under passing to subquotients: since it is true for objects coming from $\sC$, it is true in general by Proposition \ref{p1}.
\end{proof}

To conclude the proof of Theorem \ref{t2}, we show that $\bar \pi_0$ is essentially surjective. Let $Y=(\pi(C),e)\in S(\sC)$, where $C\in \sC$ and $e$ is an idempotent of $\End_{\sC/\sN_\sC}(\pi(C))$. By the full faithfulness of $\bar \pi_0$, $e$ lifts to an idempotent of $p(C)$, where $p:\sC\to  T_0(\sC)$ is the canonical functor. \qed

\begin{cor}\label{c2}Let $F:\sC\to\sA$ be a $\otimes$-functor with $\sA\in \Ex^\rig$. Assume that $Z(\sC)$ is a field. If  $F$ is full and $\sA$ is split,  then the functor $$\bar F:T(\sC)\to \sA$$ induced from Theorem \ref{t1} factors through $S(\sC)$ and $\Ker\bar F = \Ker\bar \pi$.
\end{cor}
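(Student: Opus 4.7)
The plan is to factor $F$ through $S(\sC)$ already at the level of $\sC$, use the universal property of Theorem \ref{t1} to transport the factorization to $\bar F$, and then invoke Proposition \ref{l3} to get faithfulness of the induced functor on $S(\sC)$.

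First I would show that $F$ itself factors through $\pi:\sC\to S(\sC)$. We may assume $\sA\neq 0$, so $Z(F):Z(\sC)\to Z(\sA)$ is injective since $Z(\sC)$ is a field. Because $\sA$ is split, Lemma \ref{l9} gives $\sN_\sA=0$; by Lemma \ref{l8} applied to the full $\otimes$-functor $F$ we deduce $F(\sN_\sC)=0$. Thus $F$ factors through $\sC/\sN_\sC$, and since $\sA$ is pseudo-abelian (being abelian) this factorization extends to an additive $\otimes$-functor $G:S(\sC)\to\sA$ with $F=G\circ\pi$. Moreover $G$ is automatically exact: $S(\sC)$ is split, and by Proposition \ref{p15} every additive functor out of a split abelian category is exact.

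Next I would identify $\bar F$ with $G\circ\bar\pi$. The composite $G\circ\bar\pi:T(\sC)\to\sA$ is an exact $\otimes$-functor extending $F=G\circ\pi$, so by the uniqueness clause of Theorem \ref{t1} we have $\bar F=G\circ\bar\pi$. This exhibits the desired factorization of $\bar F$ through $S(\sC)$ and gives $\Ker\bar\pi\subseteq\Ker\bar F$ for free.

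For the reverse inclusion it suffices to show that $G$ is faithful. We have $Z(S(\sC))=Z(\sC)$: passing to $\sC/\sN_\sC$ does not affect $\End(\un)=Z(\sC)$, since $\sN_\sC(\un,\un)=0$ when $Z(\sC)$ is a field (the only $f\in Z(\sC)$ with $gf=0$ for every $g$ is $f=0$, because the trace on $\un$ is the identity), and pseudo-abelian completion also preserves $\End(\un)$. Thus $G$ is an exact $\otimes$-functor from a rigid abelian $\otimes$-category with field center to the nonzero abelian $\otimes$-category $\sA$, so Proposition \ref{l3} gives $G$ faithful; hence $\Ker G=0$ and $\Ker\bar F=\Ker\bar\pi$. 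The main subtlety lies in the careful verification of the hypotheses of Lemma \ref{l8} (fullness of $F$, injectivity of $Z(F)$, and $\sN_\sA=0$) and of the computation $Z(S(\sC))=Z(\sC)$ needed to apply Proposition \ref{l3}; once these are in place the conclusion is a formal assembly of the universal properties of $T(\sC)$ and $S(\sC)$.
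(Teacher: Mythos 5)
Your argument is correct and is essentially the route the paper intends, but with the ingredients spelled out explicitly. The paper's one-line proof cites Remark~\ref{r2} and Theorem~\ref{t2}; you effectively reprove the content of Remark~\ref{r2} (factorisation of a full $\otimes$-functor to a split category through $\pi:\sC\to S(\sC)$, via Lemmas~\ref{l9} and~\ref{l8} and pseudo-abelian completion) and then, instead of invoking Theorem~\ref{t2}, you apply Proposition~\ref{l3} directly to the induced exact $\otimes$-functor $G:S(\sC)\to\sA$ to get faithfulness and hence $\Ker\bar F=\Ker\bar\pi$. That substitution is a small streamlining: Theorem~\ref{t2} identifies $T(\sC)/\Ker\bar\pi$ with $S(\sC)$, but once you have $\bar F=G\circ\bar\pi$ with $G$ faithful exact, the equality of Serre kernels is immediate without it. Two minor remarks: the injectivity of $Z(F)$ that you introduce is not actually used for the implication $F(\sN_\sC)\subseteq\sN_\sA$ (that direction of Lemma~\ref{l8} uses only fullness), and $\sN_\sC(\un,\un)=0$ holds in any rigid category (take $g=1$), independently of $Z(\sC)$ being a field; neither affects correctness. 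You also correctly flag the implicit $\sA\neq 0$ assumption needed to invoke Proposition~\ref{l3}.
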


\begin{proof} This follows from Remark \ref{r2} and Theorem \ref{t2}. 
\end{proof}

\begin{cor} \label{c6} If $Z(\sC)$ is a field, consider the following conditions:
\begin{thlist}
\item The functor $\bar\pi$ of \eqref{eq4} is an equivalence of $\otimes$-categories.
\item  $Z(T(\sC))$ is a field. 
\item $\sC\to T(\sC)$ is faithful.
\item $\sN=0$.
\end{thlist}
Then (i) $\iff$ (ii), and (i) + (iii) $\iff$ (iv).
\end{cor}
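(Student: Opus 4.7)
The plan is to prove the two equivalences in turn, with (i) $\iff$ (ii) also doing most of the work for (iv) $\Rightarrow$ (i).

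For (i) $\iff$ (ii): the direction (i) $\Rightarrow$ (ii) is immediate since an equivalence $\bar\pi$ gives $Z(T(\sC))\cong Z(S(\sC))$, and $Z(S(\sC)) = Z(\sC)$ because $\sN(\un,\un)=0$ (the categorical trace on $\End(\un)$ is the identity, so any $f\in\sN(\un,\un)$ satisfies $f=\tr(1\cdot f)=0$) and the pseudo-abelian envelope preserves $\End(\un)$. Conversely, if $Z(T(\sC))$ is a field, then $S(\sC)\ne 0$ (its unit is nonzero by the same computation) and Proposition \ref{l3} applies to the exact $\otimes$-functor $\bar\pi\colon T(\sC)\to S(\sC)$, forcing it to be faithful; hence $\Ker\bar\pi=0$ and $\bar\pi=\bar\pi_0$ is an equivalence by Theorem \ref{t2}.

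For (i) + (iii) $\Rightarrow$ (iv): the composition $\sC\to T(\sC)\iso S(\sC)=(\sC/\sN)^\natural$ is faithful, and since $\sC/\sN\hookrightarrow(\sC/\sN)^\natural$ is fully faithful (pseudo-abelian envelope), faithfulness descends to $\sC\to\sC/\sN$, i.e., $\sN=0$.

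The main content is (iv) $\Rightarrow$ (i) + (iii). Assuming $\sN=0$, we have $S(\sC)=\sC^\natural$ and the inclusion $\sC\hookrightarrow S(\sC)$ is fully faithful; factoring through $T(\sC)$ gives (iii). For (i) I plan to construct a quasi-inverse $\sigma\colon S(\sC)\to T(\sC)$ of $\bar\pi$. Since $T(\sC)$ is pseudo-abelian (being abelian), the $\otimes$-functor $\iota_\sC\colon\sC\to T(\sC)$ extends uniquely to an additive $\otimes$-functor $\sigma\colon S(\sC)=\sC^\natural\to T(\sC)$ by the universal property of the pseudo-abelian envelope. The key observation is that since $S(\sC)$ is split every short exact sequence in it splits, so $\sigma$ is automatically exact as an additive functor. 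Then $\bar\pi\circ\sigma$ restricts on $\sC$ to the inclusion $\sC\hookrightarrow S(\sC)$, forcing $\bar\pi\circ\sigma=\Id_{S(\sC)}$ by uniqueness of the pseudo-abelian extension; and $\sigma\circ\bar\pi\colon T(\sC)\to T(\sC)$ is an exact $\otimes$-functor restricting to $\iota_\sC$ on $\sC$, so $\sigma\circ\bar\pi=\Id_{T(\sC)}$ by the $2$-universality of $T(\sC)$ from Theorem \ref{t1}. Hence $\bar\pi$ is an equivalence.

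The main obstacle I expect is exactly the exactness of $\sigma$: the pseudo-abelian extension only gives an additive $\otimes$-functor, and one needs exactness to invoke Theorem \ref{t1}'s universal property in the composition $\sigma\circ\bar\pi$. The splitness of $S(\sC)$, which is part of the standing assumption in this section, is precisely what resolves this.
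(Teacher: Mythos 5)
Your proof is correct and follows essentially the same route as the paper. The equivalence (i) $\iff$ (ii) is handled via Proposition \ref{l3} (plus Theorem \ref{t2} for fullness, which the paper leaves implicit), (i) + (iii) $\Rightarrow$ (iv) is the same short argument, and for (iv) $\Rightarrow$ (i) + (iii) the paper packages what you do by hand: it cites Corollary \ref{r1} applied to the abelian split category $\sC^\natural$ together with the invariance $T(\sC)\iso T(\sC^\natural)$ from Theorem \ref{t1}, whereas you re-derive that corollary inline by extending $\iota_\sC$ along $\sC\inj\sC^\natural$, using splitness of $S(\sC)$ for the exactness of $\sigma$, and then invoking both universal properties to get the two identities $\bar\pi\sigma=\Id$ and $\sigma\bar\pi=\Id$. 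The content is the same; you simply unpack the references.
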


\begin{proof} (i) $\iff$ (ii) is obvious in view of Proposition \ref{l3}.
The implication (i) + (iii) $\Rightarrow$ (iv) is also trivial. Assume (iv). Then $\sC^\natural$ is abelian and split. By Corollary \ref{r1}, the functor $\sC^\natural\to T(\sC^\natural)$ is an equivalence of $\otimes$-categories. By Theorem \ref{t1}, $T(\sC)\to T(\sC^\natural)$ is also an equivalence of $\otimes$-categories. This implies (i), and obviously (iii) as well.
\end{proof}

\begin{ex}\label{ex1} Suppose $\car K=0$. Let $\sC=\Rep_K(G)$ where $G$ is an affine $K$-group. In Example \ref{ex3}, we may take $L=K$, $\sV=\Vec_K$ and for $H$ the forgetful functor. Here $\sC$ and $\sC/\sN$ are abelian. The functor $\bar\pi$ from \eqref{eq4} and the $\otimes$-retraction $\rho_\sC$ of Corollary \ref{r1} yield an exact $\otimes$-functor
\begin{equation}\label{eq9}
T(\sC)\to S(\sC)\times \sC.
\end{equation} 
If $G$ is proreductive, then $\sN=0$, $\sC=T(\sC)=S(\sC)$ and \eqref{eq9} factors through the diagonal functor. On the other hand, if $G$ is not proreductive, then $\sN\ne 0$ and $\bar \pi$ \emph{does not} factor through the retraction $\rho_\sC$.\\ 
If $G=\G_a$ (or more generally if its prounipotent radical is $\G_a$, as in \cite[App. C]{AK2}), O'Sullivan has proven that \eqref{eq9} is an equivalence \cite{os4}. Can one compute $T(\sC)$ in more complicated cases?
\end{ex}

\section{Application to motivic conjectures}\label{s8}

\subsection{The standard conjecture D}
\begin{thm}\label{t3} Let $\sM_\rat(k)$ be the $\Q$-linear category of Chow motives over a field $k$.\\
a) Let $\pi:\sM_\rat(k)\to \sM_\num(k)$ be the canonical functor to the $\Q$-linear  abelian category of motives modulo numerical equivalence. The functor $\pi$ induces a $\otimes$-functor 
\begin{equation}\label{eq7}
\bar \pi: T(\sM_\rat(k))\to \sM_\num(k)\end{equation}
which is a Serre localisation.\\
b) Let $F:\sM_\rat(k)\to \sA$ be a $\otimes$-functor where $\sA$ is abelian and rigid.
If $\sA$ is split and $F$ is full, then $F$ factors through numerical equivalence.\\ 
c) We have: $\bar\pi$ is an equivalence $\iff$ $Z(T(\sM_\rat(k))$ is a field.
\end{thm}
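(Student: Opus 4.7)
The plan is to identify $\sM_\num(k)$ with the category $S(\sM_\rat(k))$ of Section 7.2, after which all three parts will follow almost formally from Theorem \ref{t2} and its corollaries.

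First, I would check that $\sM_\rat(k)$ falls into the setting of Example \ref{ex3}, so that $(\sM_\rat(k)/\sN)^\natural$ is automatically abelian and split (in fact semisimple). The centre $Z(\sM_\rat(k))=\Q$ is a field, and any Weil cohomology on smooth projective $k$-varieties furnishes a $\Q$-linear $\otimes$-functor from $\sM_\rat(k)$ to a rigid $L$-linear abelian $\otimes$-category with finite-dimensional Hom groups (for instance, finite-dimensional $\Z$-graded $L$-vector spaces), which is exactly what Example \ref{ex3} requires.

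Next I would invoke the key classical identification: $\sN_{\sM_\rat(k)}$ coincides with numerical equivalence (\cf\ \cite[Ex.\ 7.1.3]{AK2}). This is because the categorical trace on $\sM_\rat(k)$ recovers the intersection pairing between correspondences, so a correspondence is universally of trace zero precisely when it is numerically trivial. Together with Jannsen's theorem \cite{jannsen} (which in particular makes $\sM_\num(k)$ abelian, hence pseudo-abelian), this yields
\[S(\sM_\rat(k)) = (\sM_\rat(k)/\sN)^\natural = \sM_\num(k).\]

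With this identification in hand, part a) follows from Theorem \ref{t2}: the canonical factorisation $T(\sM_\rat(k))\to T_0(\sM_\rat(k))\by{\bar\pi_0} S(\sM_\rat(k))=\sM_\num(k)$ presents $\bar\pi$ as a Serre localisation composed with an equivalence, hence itself a Serre localisation. Part b) is an immediate application of Corollary \ref{c2} with $\sC=\sM_\rat(k)$: any full $\otimes$-functor into a split rigid abelian $\otimes$-category factors through $S(\sM_\rat(k))=\sM_\num(k)$, so $F$ factors through numerical equivalence. Part c) is precisely Corollary \ref{c6} (i) $\iff$ (ii) applied to $\sC=\sM_\rat(k)$, valid since $Z(\sM_\rat(k))=\Q$ is a field.

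The main obstacle here is external rather than internal: the proof rests squarely on the classical inputs (Jannsen's semisimplicity theorem and the identification of $\sN$ with numerical equivalence), together with the existence of a Weil cohomology to activate Example \ref{ex3}. Once those are granted, Theorem \ref{t3} is a formal corollary of the general machinery developed in Sections 5 and 7.
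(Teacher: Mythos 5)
Your proposal is correct and follows essentially the same route as the paper: activate Example \ref{ex3} via a Weil cohomology so that $S(\sM_\rat(k))=(\sM_\rat(k)/\sN)^\natural=\sM_\num(k)$ (using Jannsen), then read off a), b), c) from Theorem \ref{t2}, Corollary \ref{c2} and Corollary \ref{c6} respectively. The only cosmetic difference is that the paper takes the target of $H$ to be $\Z/2$-graded vector spaces $\Vec_L^\pm$ with the Koszul sign rule rather than $\Z$-graded ones, which is immaterial for the argument.
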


\begin{proof} Note that $Z(\sM_\rat(k))=\Q$.\\ 
a)  In Example \ref{ex3}, take $\sC=\sM_\rat(k)$ and for $H$ a Weil cohomology. Here, $\sV=\Vec_L^\pm$, the abelian $\otimes$-category of finite-dimensional $\Z/2$-graded $L$-vector spaces, where $L$ is the field of coefficients of $H$; the commutativity constraint is given by the Kozsul rule.   We apply Theorem \ref{t2}. The category $S(\sC)= (\sC/\sN)^\natural$ is the category $\sM_\num(k)$ (\cite{jannsen}, which inspired Example \ref{ex3}).

b) follow from Corollary \ref{c2}.

d) follows from Corollary \ref{c6}.
\end{proof}

\begin{cor}\label{c1}
 If $Z(T(\sM_\rat(k))$ is a field, the standard conjecture D is true.\qed
\end{cor}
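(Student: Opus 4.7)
The plan is essentially to unwind the earlier theorems, so there is no new mathematical input required. First, I would fix a Weil cohomology $H : \sM_\rat(k) \to \Vec_L^\pm$ with coefficients in a field $L$, viewed as a $\otimes$-functor into the rigid abelian $\otimes$-category $\Vec_L^\pm \in \Ex^\rig$ (with the Koszul commutativity constraint, as in the proof of Theorem \ref{t3}a)). The universal property of Theorem \ref{t1} then yields a factorization $H \simeq \bar H \circ \iota$, unique up to $\otimes$-isomorphism, where $\iota : \sM_\rat(k) \to T(\sM_\rat(k))$ is the canonical functor and $\bar H$ is an exact $\otimes$-functor.

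Next, since $Z(T(\sM_\rat(k)))$ is a field by hypothesis, Theorem \ref{t3}c) tells me that $\bar\pi : T(\sM_\rat(k)) \iso \sM_\num(k)$ is an equivalence of $\otimes$-categories. Picking a $\otimes$-quasi-inverse and using $\pi = \bar\pi \circ \iota$, I get $\iota \simeq \bar\pi^{-1} \circ \pi$; composing on the left with $\bar H$ shows that $H$ itself factors up to $\otimes$-isomorphism through $\pi : \sM_\rat(k) \to \sM_\num(k)$. In particular, any morphism $f$ of $\sM_\rat(k)$ with $\pi(f) = 0$ also satisfies $H(f) = 0$, i.e.\ every numerically trivial cycle is homologically trivial for $H$. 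Combined with the automatic reverse inclusion $\sim_\hom \subseteq \sim_\num$ (intersection numbers can be read off from the cohomology class), this yields $\sim_\hom = \sim_\num$ for this particular $H$. Since $H$ was arbitrary, this is exactly the standard conjecture D.

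There is no genuine obstacle to the argument: the entire content has been packaged into Theorems \ref{t1} and \ref{t3}. The only points requiring care are to keep the direction of the implication straight (the nontrivial direction of standard D is $\sim_\num \Rightarrow \sim_\hom$, the reverse being automatic), and to observe that a Weil cohomology really is a strong $\otimes$-functor into $\Vec_L^\pm \in \Ex^\rig$, for which the Koszul sign rule on the $\Z/2$-graded target is precisely what is needed.
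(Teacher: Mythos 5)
Your proposal is correct and coincides with the argument the paper leaves implicit behind the \qed: apply Theorem \ref{t1} to factor a Weil cohomology $H$ through $T(\sM_\rat(k))$, invoke Theorem \ref{t3}~c) to identify $T(\sM_\rat(k))$ with $\sM_\num(k)$ under the hypothesis, and conclude that $H$ factors through numerical equivalence, giving the nontrivial inclusion $\sim_\num\subseteq\sim_\hom$. Your remarks on the direction of the implication and on $\Vec_L^\pm$ with the Koszul sign rule being the relevant object of $\Ex^\rig$ are exactly the points the paper takes for granted.
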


In Theorem \ref{t3}, we can replace rational equivalence by any adequate equivalence relation which is coarser than the homological equivalence given by a Weil cohomology $H$ as in the proof of a). For example, this homological equivalence itself yields the category of motives $\sM_H(k)$ together with the faithful functor $H:\sM_H(k)\to \Vec_L^\pm$. Applying Corollary \ref{c6}, we find:

\begin{thm}\label{t9}
The standard conjecture D holds for the Weil cohomology $H$ $\iff$ $Z(T(\sM_H(k))$ is a field.\qed
\end{thm}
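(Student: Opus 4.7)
The plan is to apply Corollary \ref{c6} to $\sC = \sM_H(k)$, and to observe that in this setting condition (iii) of that corollary is automatic, thanks to the Weil cohomology $H$ itself. Once (iii) is available, the equivalences (i) $\iff$ (ii) and (i) + (iii) $\iff$ (iv) from Corollary \ref{c6} collapse to the desired equivalence (ii) $\iff$ (iv), and (iv) is exactly the statement of the standard conjecture D for $H$.

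First I would set up the framework of Example \ref{ex3}: $Z(\sM_H(k)) = \Q$ is a field, and the Weil cohomology $H : \sM_H(k) \to \Vec_L^\pm$ is a $\Q$-linear $\otimes$-functor to a nonzero rigid $L$-linear abelian $\otimes$-category with finite-dimensional Hom groups. So $S(\sM_H(k)) = (\sM_H(k)/\sN)^\natural$ exists and is abelian semisimple. As in the proof of Theorem \ref{t3} a), one identifies $\sN_{\sM_H(k)}$ with the ideal of numerically trivial correspondences, and Jannsen's theorem \cite{jannsen} shows that $\sM_H(k)/\sN$ is already pseudo-abelian, so $S(\sM_H(k)) \simeq \sM_\num(k)$.

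Next I would verify (iii). By construction of $\sM_H(k)$, the functor $H$ is faithful. Applying Theorem \ref{t1}, $H$ factors uniquely as $\sM_H(k) \to T(\sM_H(k)) \to \Vec_L^\pm$; since the composite is faithful, the first arrow $\sM_H(k) \to T(\sM_H(k))$ is faithful, which is condition (iii) of Corollary \ref{c6}. It then remains to invoke Corollary \ref{c6}: (ii) $\iff$ (i), and combined with the automatic (iii), this is equivalent to (i) + (iii) $\iff$ (iv), namely $\sN_{\sM_H(k)} = 0$. The latter is literally the assertion that homological and numerical equivalences agree on $\sM_H(k)$, i.e., the standard conjecture D for $H$.

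The main obstacle is purely organizational rather than technical: one must reconcile the identification $S(\sM_H(k)) \simeq \sM_\num(k)$ already used for Chow motives in Theorem \ref{t3} a) with the present setting, and check that the faithful Weil cohomology is exactly what upgrades the weaker equivalence (ii) $\iff$ (i) of Corollary \ref{c6} into the sharper equivalence (ii) $\iff$ (iv) that yields conjecture D. No new motivic input beyond Jannsen's theorem and the trace-pairing description of numerical equivalence is needed.
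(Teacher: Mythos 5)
Your proof is correct and essentially the same as the paper's: the paper's one-line argument is precisely to invoke Corollary \ref{c6} with $\sC=\sM_H(k)$, using the faithful Weil cohomology $H$ to secure condition (iii) (so that (ii) $\iff$ (iv)); you have merely spelled out the factorization of $H$ through $T(\sM_H(k))$ via Theorem \ref{t1} and the identification $S(\sM_H(k))\simeq\sM_\num(k)$, which the paper leaves implicit.
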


In \cite{schappi}, Sch\"appi constructs a graded-Tannakian category $\M_H (k)$ along with a $\otimes$-functor $S:\sM_H(k)\to \M_H(k)$ lifting $H$, and proves that $S$ is an equivalence of categories under the standard conjecture D. From Theorem \ref{t1}, we obtain a $\otimes$-functor $$\bar S:T(\sM_H (k))\to \M_H(k)$$ extending $S$. Using $\bar S$, we get:

\begin{prop} \label{p8} The standard conjecture D holds for $H$ if and only if $\M_H(k)$ is split (\eg semi-simple) and $S$ is full.
\end{prop}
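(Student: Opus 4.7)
The plan is to establish both implications using Corollary \ref{c2} together with Schäppi's theorem that $S$ is an equivalence under the standard conjecture D.

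For the forward direction, I would assume the standard conjecture D for $H$. By Schäppi's theorem, $S$ is then an equivalence of categories, in particular full. Moreover, the standard conjecture D asserts that the canonical functor $\sM_H(k) \to \sM_\num(k)$ is an equivalence, so by Jannsen \cite{jannsen} the category $\sM_H(k)$ is abelian semisimple; transporting along $S$, the category $\M_H(k)$ is also semisimple, hence split by Proposition \ref{p15}.

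For the converse, I would apply Corollary \ref{c2} to $F = S$: the hypotheses $Z(\sM_H(k))=\Q$ is a field, $F$ full, and $\sA=\M_H(k)$ split all hold. The conclusion is that $\bar S:T(\sM_H(k))\to \M_H(k)$ factors through $S(\sM_H(k))$, which, arguing as in the proof of Theorem \ref{t3} a), equals $\sM_\num(k)$ (the $\otimes$-ideal $\sN$ on $\sM_H(k)$ cuts out exactly numerical equivalence modulo homological). Precomposing with the canonical functor $\sM_H(k)\to T(\sM_H(k))$ yields a factorization of $S$ itself as $\sM_H(k)\by{q} \sM_\num(k)\to \M_H(k)$, where $q$ is the projection to numerical motives. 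The key observation is that $S$ is faithful: by the lifting property of Schäppi's construction there is a fiber functor $\omega:\M_H(k)\to \Vec_L^\pm$ with $H=\omega\circ S$, and $H$ is faithful on $\sM_H(k)$ by the very definition of homological equivalence. Hence $q$ must be faithful, i.e., homological and numerical equivalences agree, which is the standard conjecture D.

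I expect the only point requiring care is the identification $S(\sM_H(k))=\sM_\num(k)$, justified exactly as in the proof of Theorem \ref{t3} a); the rest is a formal consequence of Corollary \ref{c2} and the faithfulness of the Weil cohomology on homological motives.
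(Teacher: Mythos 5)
Your proof is correct and follows essentially the same route as the paper: the forward direction via Schäppi's theorem and Jannsen's semi-simplicity, and the converse via the universal property of $(\sC/\sN_\sC)^\natural$ (you invoke Corollary~\ref{c2}, the paper invokes the equivalent Remark~\ref{r2}) to show $S$ factors through numerical motives, then faithfulness of $H$ forces the factorisation to be through an equivalence of Hom groups. The only difference is cosmetic: you spell out the final faithfulness argument and the identification $S(\sM_H(k))=\sM_\num(k)$ more explicitly than the paper, which leaves them implicit.
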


\begin{proof} \emph{If}: by \cite[Th. 3.2.1 (i)]{schappi}, the standard conjecture D implies that $S$ is an equivalence of categories. Since it also implies that $\sM_H(k)$ is abelian semi-simple, we conclude. \emph{Only if}: by Remark \ref{r2}, the hypothesis implies that $\bar S$ factors through $S(\sM_H(k))$;  this in turn implies that $H$ factors through numerical equivalence.
\end{proof}

(See \cite[Prop. 3.3.4]{schappi} for a consequence of the semi-simplicity of $\M_H(k)$, assumed alone.)

\subsection{Voevodsky's conjecture}

Let $k$ be a field. Recall that an algebraic cycle $z$ on a smooth projective variety $X$ is \emph{smash-nilpotent} if $z^{\times n}$ is rationally equivalent to $0$ on $X^n$ for $n\gg 0$. This defines an adequate equivalence relation $\tnil$.  In  \cite[Conj. 4.2]{voe}, Voevodsky conjectured that   $\sM_\tnil(k)\iso \sM_\num(k)$.

By Theorem \ref{t1}, we have $T(\sM_\rat(k))\iso T(\sM_\tnil(k))$. Let us write $T(k)$ for this rigid abelian $\otimes$-category.

\begin{prop}\label{p5} Voevodsky's conjecture is equivalent to the following two statements put together:
\begin{thlist}
\item $Z(T(k))$ is a field;
\item the functor $\sM_\tnil(k)\to T(k)$ is faithful.
\end{thlist}
\end{prop}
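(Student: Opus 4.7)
The strategy is to analyse the factorisation
\[\sM_\tnil(k)\to T(k)\by{\bar\pi}\sM_\num(k),\]
whose composite is nothing but the canonical quotient functor (this is immediate from the universal properties: by Theorem~\ref{t1} the functor $\sM_\rat(k)\to T(k)$ factors through $\sM_\tnil(k)$, and by definition $\bar\pi$ extends the projection $\sM_\rat(k)\to\sM_\num(k)$). In particular, this composite is full and essentially surjective, so the whole argument reduces to understanding when each of its two factors is an equivalence.

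For the direction Voevodsky $\Rightarrow$ (i) + (ii), I would first use Jannsen's theorem to note that $\sM_\num(k)$ is abelian semisimple, hence split in the sense of Definition~\ref{r4} by Proposition~\ref{p15}. Assuming Voevodsky's conjecture, $\sM_\tnil(k)\iso\sM_\num(k)$ is therefore abelian and split, so Corollary~\ref{r1} gives that $\sM_\tnil(k)\to T(\sM_\tnil(k))=T(k)$ is an equivalence of $\otimes$-categories. This simultaneously yields (ii) (equivalences are faithful) and (i), since $Z(T(k))\simeq Z(\sM_\tnil(k))=\Q$.

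For the converse direction, I would invoke Theorem~\ref{t3} c): assumption (i) forces $\bar\pi\colon T(k)\iso\sM_\num(k)$ to be an equivalence. Combined with assumption (ii), the composite $\sM_\tnil(k)\to T(k)\iso\sM_\num(k)$ is faithful. Since this composite is the canonical quotient, hence full and essentially surjective, it must be an equivalence, which is Voevodsky's conjecture. The proof is essentially a matter of assembling previously established results; the only thing to be careful about is the identification of the composite with the canonical quotient, which is routine from the universal properties of $\sM_\tnil(k)$, $T(\sM_\rat(k))$, and the construction of $\bar\pi$.
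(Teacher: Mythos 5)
Your proof is correct and uses essentially the same ingredients as the paper's, though less efficiently packaged: the paper simply invokes Corollary~\ref{c6} with $\sC=\sM_\tnil(k)$, noting that its condition (iv) ($\sN=0$) is Voevodsky's conjecture because $\sM_\tnil(k)$ is pseudo-abelian, and that its conditions (ii) and (iii) are your (i) and (ii). Your unpacked version (Corollary~\ref{r1} for the forward direction; Theorem~\ref{t3}~c) plus a full/faithful/essentially-surjective argument for the converse, where essential surjectivity follows because full faithfulness lets idempotents lift) recovers the same content.
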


\begin{proof}
This follows from Corollary \ref{c6} applied to $\sC=\sM_\tnil(k)$, noting that this category is pseudo-abelian by definition.\end{proof}

\subsection{Motives over a base}

Let $S$ be a nonempty, connected, separated regular excellent scheme of finite Krull dimension. We then have the Deninger-Murre--O'Sullivan rigid $\otimes$-category of relative Chow motives over $S$ \cite{den-murre}, \cite[\S 5.1]{os3}. For coherence with the classical definition of motives, we shall restrict to the thick subcategory of O'Sullivan's category defined by motives of smooth projective $S$-schemes (O'Sullivan considers more generally smooth proper $S$-schemes): we denote this category by $\sM_\rat(S)$. We have $Z(\sM_\rat(S))=\Q$.  

Let $K$ be the function field of $S$. If $j:\Spec K\to S$ is the corresponding inclusion, we have the restriction $\otimes$-functor
\begin{equation*}
j^*:\sM_\rat(S)\to \sM_\rat(K).
\end{equation*}

We write $\sM_\rat(K,S)$ for its essential image (motives with good reduction relatively to $S$).

\begin{thm}\label{t6}  The functor $j^*$ is full and its kernel is smash-nilpotent. It induces an equivalence of categories
\[T(S)\iso T(K,S)\]
where $T(S):=T(\sM_\rat(S))$ and $T(K,S):=T(\sM_\rat(K,S))$, and  a full embedding
\[\sM_\num(S)\inj \sM_\num(K)\]
where $\sM_\num(S):=(\sM_\rat(S)/\sN)^\natural$.
\end{thm}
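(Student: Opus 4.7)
My plan is to establish the three conclusions of Theorem \ref{t6} in the order stated and deduce the $\num$-level embedding from an intermediate equivalence. First I would prove fullness and $\otimes$-nilpotence of $\ker(j^*)$ jointly. For smooth projective $S$-schemes $X,Y$ the Hom groups in $\sM_\rat(S)$ are shifts of $\operatorname{CH}^*(X\times_S Y)$, and $j^*$ is restriction to the generic fibre. Fullness follows immediately from the localisation sequence for Chow groups: any cycle on $X_K\times_K Y_K$ lifts to its Zariski closure in $X\times_S Y$. For the kernel, the same localisation shows that $j^*\alpha=0$ forces $\alpha$ to be a push-forward of a cycle supported on $(X\times_S Y)|_Z$ for some proper closed $Z\subsetneq S$; I would then invoke a relative smash-nilpotence theorem -- a family version of Voevodsky's result that algebraically trivial cycles are smash-nilpotent -- to deduce $\otimes$-nilpotence in $\sM_\rat(S)$.

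Next, I would derive the $T$-equivalence. Given the above, $j^*\colon\sM_\rat(S)\to\sM_\rat(K,S)$ is full, essentially surjective (by definition of the essential image) and has $\otimes$-nilpotent kernel. Since any $\otimes$-functor preserves $\otimes$-nilpotence and $\ker(j^*)\subseteq\sqrt[\otimes]{0}_{\sM_\rat(S)}$, the functor $j^*$ also reflects $\otimes$-nilpotence, so it induces an equivalence $\sM_\rat(S)/\sqrt[\otimes]{0}\iso\sM_\rat(K,S)/\sqrt[\otimes]{0}$. Applying Theorem \ref{t1} to both sides yields $T(S)\iso T(K,S)$.

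For the embedding at the numerical level, trace-preservation by $j^*$ gives $j^*(\sN_{\sM_\rat(S)})\subseteq\sN_{\sM_\rat(K,S)}$ (which coincides with $\sN_{\sM_\rat(K)}\cap\sM_\rat(K,S)$ as $\sM_\rat(K,S)$ is a rigid full $\otimes$-subcategory of $\sM_\rat(K)$). Fullness of $j^*$ makes $\sM_\rat(S)/\sN\to\sM_\rat(K,S)/\sN$ full; I would deduce faithfulness by observing that if $j^*\alpha\in\sN$, then for every $f\colon N\to M$ the trace $\tr(f\alpha)\in Z(\sM_\rat(S))=\Q$ equals $\tr(j^*f\cdot j^*\alpha)=0$, and fullness produces \emph{every} $g=j^*f$. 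Since pseudo-abelian envelopes preserve fully faithful inclusions, this gives $\sM_\num(S)\iso(\sM_\rat(K,S)/\sN)^\natural\inj\sM_\num(K)$.

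The hard part will be the smash-nilpotence of $\ker(j^*)$. The obvious localisation argument only places such $\alpha$ above a proper closed $Z\subsetneq S$, and tensor powers $\alpha^{\otimes n}$ (relative to $S$) remain supported above $Z$, so nilpotence does not follow formally. One truly needs a relative nilpotence statement -- for example, that $S$-families of cycles whose generic fibre vanishes rationally are tensor-nilpotent in $\sM_\rat(S)$, possibly by Noetherian induction on $\dim S$ combined with a stratum-wise nilpotence theorem -- and locating or proving such a result is the main technical obstacle. Everything else is formal from Theorem \ref{t1} and the elementary properties of $\sN$.
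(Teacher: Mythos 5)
Your plan reproduces the paper's argument structure, and you have correctly located the one place where something nontrivial is needed. In the paper, the first two assertions (fullness of $j^*$ and smash-nilpotence of its kernel) are not proved; they are quoted from O'Sullivan, \emph{Algebraic cycles on an abelian variety}, J.\ reine angew.\ Math.\ \textbf{654} (2011), Proposition 5.1.1. So the ``relative smash-nilpotence theorem'' you were looking for already exists in the literature, and your instinct that the naive Zariski-closure/localisation argument only gives support over a proper closed $Z \subsetneq S$ (and that tensor powers stay supported there, so nilpotence does not follow formally) is exactly right --- this is genuinely a theorem, not a formal consequence.

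Once the first assertions are in hand, the rest of your argument coincides with the paper's. Your derivation of $T(S)\iso T(K,S)$ --- pass to $\sM_\rat(S)/\sqrt[\otimes]{0}\iso \sM_\rat(K,S)/\sqrt[\otimes]{0}$ using fullness, essential surjectivity, and that $\Ker(j^*)\subseteq\sqrt[\otimes]{0}$ implies reflection of $\otimes$-nilpotence, then invoke the $T(\sC)\iso T(\sC/\sqrt[\otimes]{0})$ part of Theorem \ref{t1} --- is exactly how the paper uses Theorem \ref{t1} here. For the numerical-level embedding, you are re-proving Lemma \ref{l8} from scratch: fullness of $j^*$ gives fullness of $\sM_\rat(S)/\sN\to\sM_\rat(K,S)/\sN$, and injectivity of $Z(j^*)\colon\Q\to\Q$ gives faithfulness via traces (note fullness is not actually needed for faithfulness, only for fullness and for $j^*(\sN)\subseteq\sN$). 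The paper simply cites Lemma \ref{l8}. So the only gap in your proposal is the missing input now supplied by the reference; otherwise the argument is the paper's.
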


\begin{proof} The first assertions are \cite[Prop. 5.1.1]{os3}. The equivalence of categories then follows fromTheorem \ref{t1}, while the full embedding follows from  Lemma \ref{l8}.
\end{proof}

Let now $i:Z\inj S$ be a closed subscheme of $S$, also connected and regular, with function field $L$; we have a pull-back functor $i^*:\sM_\rat(S)\to \sM_\rat(Z)$. Theorem \ref{t6} then yields a ``specialisation'' functor
\[i^!:T(K,S)\to T(L,Z).\]

Since $i^*$ is not full, it does not \emph{a priori} induce a functor $\sM_\num(S)\to \sM_\num(Z)$. 

Using a Weil cohomology $H$ verifying the smooth and proper base change theorem (e.g. $l$-adic cohomology for a prime $l$ invertible on $S$) and using the monoidal section theorem, we can construct as in \cite[Th. 11]{AK} a ``specialisation'' $\otimes$-functor $\sM_\num(S)\to \sM_\num(Z)$, depending \emph{a priori} on $H$; we leave details to the interested reader.

\subsection*{Acknowledgements}
We would like to thank Y. Andr\'e for his kind interest in this work and for pointing out Example \ref{ex1} at an early stage, and  J. Wildeshaus for the reference to Prop. 5.1.1 in \cite{os3}. The first author acknowledges K. Coulembier for explaining his works \cite{coul1} and \cite{coul2}. The second author thanks P. O'Sullivan for several exchanges on his work \cite{os} and for communicating \cite{os4}. Finally, the first author thanks Institut de Math\'ematiques de Jussieu-Paris Rive Gauche for hospitality and both authors thank CNRS for support.

\end{document}